\documentclass[11pt]{article}
\usepackage[utf8]{inputenc}
\usepackage{amsmath,amssymb,amsthm,amsfonts}
\usepackage{tikz-cd}
\usepackage{graphicx}
\usepackage{subcaption}
\usepackage{eucal}
\usepackage{enumitem}
\usepackage{geometry}
\usepackage{color}
\usepackage{authblk}
\numberwithin{equation}{section}

\theoremstyle{plain}
\newtheorem{theorem}{Theorem}[section]
\newtheorem{proposition}[theorem]{Proposition}
\newtheorem{lemma}[theorem]{Lemma}
\newtheorem{corollary}[theorem]{Corollary}
\theoremstyle{remark}
\newtheorem{remark}[theorem]{Remark}
\newtheorem{examples}[theorem]{Examples}

\newcommand{\gr}{\mathbf{gr}}
\newcommand{\grd}{\gr(D)}
\newcommand{\opn}{\operatorname}
\newcommand{\SK}{\opn{SK_1}}
\newcommand{\TK}{\opn{TK_1}}
\newcommand{\Dmu}{D^{(\mu)}}

\newcommand{\Emu}{E^{(\mu)}}
\newcommand{\Eone}{E^{(1)}}
\newcommand{\Nrd}{\opn{Nrd}}
\newcommand{\zz}{\mathbb Z}
\newcommand{\nn}{\mathbb N}
\newcommand{\qq}{\mathbb Q}
\newcommand{\ff}{\mathbb F}
\newcommand{\ind}{\text{ind}}
\newcommand{\degg}{\opn{deg}}
\newcommand{\charr}{\opn{char}}
\newcommand{\ov}{\overline}

\newcommand{\wt}{\widetilde}
\newcommand{\upmu}{{(\mu)}}
\newcommand{\upone}{{(1)}}
\newcommand{\bH}{\mathbf{H}}
\newcommand{\divr}{\opn{Div}_r}
\newcommand{\scT}{\mathcal T}
\newcommand{\scB}{\mathcal B}
\newcommand{\scS}{\mathcal S}
\newcommand{\scO}{\mathcal O}

\title{The group $\mathrm{TK}_1$ of graded and valued division algebras}
\author[1]{Huynh Viet Khanh}
\author[1]{Nguyen Duc Anh Khoa}
\author[2]{Adrian R. Wadsworth}
\affil[1]{Department of Mathematics and Informatics, Ho Chi Minh City University of Education, Ho Chi Minh City, Vietnam\\\texttt{khanhhv@hcmue.edu.vn}; \texttt{anhkhoa27092002@gmail.com}}
\affil[2]{Department of Mathematics, University of California San Diego, La Jolla, CA 92093-0112, USA\\\texttt{arwadsworth@ucsd.edu\qquad\qquad\qquad\qquad\qquad\qquad\qquad\qquad\qquad\qquad\qquad  \ \!\phantom{.}}}
\date{}

\begin{document}
\maketitle

\begin{abstract}
For a division algebra $D$, let $K_1(D) = D^*/[D^*, D^*]$ and let $\operatorname{TK}_1(D)$ be the torsion subgroup of 
the abelian group $K_1(D)$.
 We study this torsion group for graded and valued division algebras, in parallel with the known theory 
 of $\operatorname{SK}_1$. For a graded division algebra $E$ finite-dimensional over its center, we give 
 exact sequences describing $\operatorname{TK}_1(E)$ in terms of $E_0$, the grade group~$\Gamma_E$, 
 and the conjugation action of $E^*$ on $E_0$. These yield explicit formulas for $\operatorname{TK}_1(E)$ in 
 the unramified, totally ramified, and semiramified cases.
 
For a tame valued division algebra $D$ over its Henselian-valued center $K$, 
we identify the obstruction group $\mathbf H$ to a congruence theorem for $\TK(D)$.  We show that if the 
residue field~$\overline K$ of the valuation on $K$ has characteristic $p > 0$, 
then $\mathbf H \cong\mu_K[p]$, the $p$-primary component of 
the group $\mu_K$ of roots of unity in $K$; but  if $\operatorname{char}(\overline K)=0$, then $\mathbf H=1$. 
We further 
prove a short exact sequence 
$$
        1\,\longrightarrow \,\mathbf H\,
        \longrightarrow \,\operatorname{TK}_1(D)\,
        \longrightarrow\, \operatorname{TK}_1(\gr(D))\,
        \longrightarrow \,1,
$$
where $\gr(D)$ is the associated graded division algebra determined by the valuation on $D$
obtained from the valuation on $K$.

We also prove a stability theorem for a graded division algebra $E$ with quotient division ring~$q(E)$, i.e.,
$$
        \operatorname{TK}_1(E)\,\cong \,\operatorname{TK}_1(q(E)),
$$
together with a new proof of the corresponding stability theorem for $\operatorname{SK}_1$. As applications, we 
obtain graded analogues of Motiee's primary decomposition and scalar-extension results for torsion Whitehead groups.
\end{abstract}

\section{Introduction}

Let $D$ be a division algebra finite-dimensional over its center $K$. The reduced Whitehead group 
$\opn{SK}_1(D)=D^{(1)}/D'$, where $D^{(1)}=\{d\in D^*\mid \opn{Nrd}_D(d) = 1\}$ and $D' = [D^*,D^*]$, 
has played an important role in the structure theory of division algebras and in applications to algebraic groups. 
A fundamental tool in its explicit computation in specific cases is the use of valuations. If $K$ is Henselian, 
the valuation $v$ on $K$ extends uniquely to $D$, and the associated graded ring $\grd$ determined by $v$ is  
a graded division algebra. The passage from $D$ to $\grd$ 
has been shown to be  particularly useful because many computations become simpler in the graded setting.

The  approach via associated graded rings was developed systematically by Hazrat and Wadsworth in their study 
in \cite{hazrat-wadsworth-2011} of $\SK$ of graded and valued division algebras. They proved, among other things, 
graded analogues of Ershov's exact sequences in \cite{ershov-1982} for $\SK(D)$ where $D$ is a division algebra over 
a Henselian field, and 
deduced explicit formulas for $\operatorname{SK}_1$ in the unramified, totally ramified, and semiramified cases.  
They further proved  the isomorphism 
$\opn{SK}_1(D)\cong\opn{SK}_1(\grd)$ for tame division algebras $D$ over Henselian fields,
thereby obtaining formulas for $\SK(D)$ from those for $\SK(\gr(D))$. Additionally, they proved  the stability isomorphism 
$\SK(E)\cong \SK(q(E))$ for a graded division algebra $E$, where $q(E)$ is the central quotient division ring of $E$. 
These results are also presented and further developed in \cite[Ch.~11]{tignol-wadsworth-2015}.

The purpose of this paper is to study the corresponding questions for the torsion subgroup $\operatorname{TK}_1(D)=\tau(K_1(D))=\tau(D^*/D')$ of the Whitehead group $K_1(D) = D^*/D'$. The congruence theorem for 
$\operatorname{SK}_1$ says that $(1+M_D)\cap D^{(1)}\subseteq D'$, where $M_D$ is the maximal ideal for the 
valuation on $D$, and it is a key tool for proving the isomorphism $\SK(D) \cong \SK(\gr(D))$. But, 
unlike $\operatorname{SK}_1(D)$, the group 
$\operatorname{TK}_1(D)$ is not controlled solely by reduced norm one elements, and there is no 
full analogue for $\TK(D)$ to the congruence theorem for $\SK(D)$.   
In~\cite{khanh-khoa-2025}, Khanh and Khoa isolated the  obstruction to a congruence theorem for $\TK$. 
Namely, if $D^{(\mu)}$ is the inverse image of $\TK(D)$ in~$D^*$, the obstruction group is
$$
        \mathbf{H}\,=((1+M_D)\cap D^{(\mu)})\big/((1+M_D)\cap D').
$$
The congruence theorem for $\operatorname{TK}_1(D)$ holds precisely when $\mathbf{H}=1$.
In Prop.~\ref{prop:obstruction-norm}  below we give a complete description of this obstruction group. Namely,
$$
        \mathbf{H}\,\cong (1+M_K)\cap\mu_K.
$$
We deduce from Lemma~\ref{lem:roots-of-unity-s} that  if the residue field $\overline K$ has characteristic $p>0$, then $\mathbf{H}\cong\mu_K[p]$, the $p$-primary component of the group of roots of unity in $K$,  while if 
$\operatorname{char}(\overline K)=0$, then $\mathbf{H}=1$. Remarkably, the obstruction group thus depends only 
on the roots of unity in the  
center $K$, not on the particular tame division algebra $D$ over $K$. These results  complete and slightly correct 
the computations for $\mathbf{H}$  given in \cite{khanh-khoa-2025}.

Having determined $\mathbf{H}$, our next step is to relate $\operatorname{TK}_1(D)$ to $\TK(\gr(D))$. We prove 
in Th.~\ref{thm:valued-tk1-exact-sequence} 
the short exact sequence
\begin{equation}
\label{SESfor$TK_1$}
        1\longrightarrow \mathbf{H}
        \longrightarrow \operatorname{TK}_1(D)
        \longrightarrow \operatorname{TK}_1(\grd)
        \longrightarrow 1.
\end{equation}
This is the $\operatorname{TK}_1$-analogue of  \cite[Th.~4.8]{hazrat-wadsworth-2011}  saying that 
$\SK(D) \cong\SK(\grd)$.
It shows that the only obstruction to having an isomorphism 
$\operatorname{TK}_1(D)\cong\operatorname{TK}_1(\grd)$ is 
given by $\mathbf{H}$. 

Before considering the obstruction group and applications to $\TK(D)$ we first establish for a graded division ring $E$
analogous formulas for $\TK(E)$  to formulas  given in \cite{hazrat-wadsworth-2011} for $\SK(E)$.  In 
Th.~\ref{thm:graded-tk1-diagram} we give a general diagram relating $\TK(E)$ to data from  $E_0$ and $\Gamma_E$
and the conjugation action of $E^*$ on $E_0$; 
this diagram is 
 analogous to the one in \cite[Th.~3.4]{hazrat-wadsworth-2011}.
  In Cor.~\ref{cor:compressed-tk1-sequence} we further compress some of the data in this big diagram into a single 
  short exact sequence.  
 We analyze the diagram and its compressed version to obtain in 
 Th.~\ref{theorem:graded-tk1-cases}  formulas for 
 $\TK(E)$ if $E$ is unramified, totally ramified, or semiramified.  After (\ref{SESfor$TK_1$}) is proved, we deduce
 in Cor.~\ref{cor:TK_1(D)cases}
 formulas for $\TK(D)$ where $D$ is a tame division algebra over its Henselian center from the corresponding 
 formulas for 
 $\TK(\gr(D))$.

We also prove  stability for $\TK$ and a new proof of stability for $\SK$. For a graded division algebra $E$ with 
quotient division algebra $Q=q(E)$, we first prove
in Prop.~\ref{prop:cokernel-torsion-free}  that the cokernel of the natural map $K_1(E)\to K_1(Q)$, namely
  $Q^*/E^*Q'$, is torsion-free. From this we  deduce in Th.~\ref{thm:stability-sk1-tk1} the stability 
  theorems
$$
        \operatorname{TK}_1(E)\cong \operatorname{TK}_1(Q)
        \quad \text{and}\quad
        \operatorname{SK}_1(E)\cong \operatorname{SK}_1(Q).
$$

Finally, we record some consequences of the stability theorem for $\TK$. 
By applying results of Motiee in \cite{motiee-2013} on 
$\TK$ of any 
finite-dimensional  division algebra and invoking the stability theorem, we obtain corresponding results for 
$\TK$ of a graded division algebra.   This includes in Th.~\ref{prdecompTK_1E}
 a graded primary decomposition theorem and in Prop.~\ref{prop:prime-to-index-injectivity} an injectivity result under 
 finite graded field extensions of degree 
 relatively prime to the index. 
 
The paper is organized as follows. In Section~2 we fix notation and conventions. In Section~3 we prove exact sequences 
for $\operatorname{TK}_1$ of a graded division algebra $E$ and derive from them explicit formulas for $\TK(E)$ in the 
unramified, totally ramified, and semiramified cases. In Section~4 we determine the obstruction group $\mathbf{H}$. In 
Section~5 we prove the exact sequence (\ref{SESfor$TK_1$}) and use it to deduce formulas for $\TK(D)$ from those 
for $\TK(\gr(D))$.
In Section~6 we prove the stability theorem for $\operatorname{SK}_1$ and $\operatorname{TK}_1$. 
In Section~7 we give applications to primary decompositions, finite graded field extensions, and tensor products.

\section{Notation and conventions and basic facts}\label{sec:notation}
For an abelian group $A$, let $\tau(A)$ denote the torsion subgroup of $A$. If $K$ is a field, let $\mu_K$ be 
the group of roots of unity in $K$; thus $\mu_K = \tau(K^*)$.  For a positive integer $m$, let 
$\mu_K(m)=\{\zeta\in\mu_K\mid \zeta^m=1\}$; and for a prime number $p$ let 
$\mu_K[p]$ denote the $p$-primary component of $\mu_K$.

We next set some notation and recall basic properties of graded division rings.  For omitted proofs, see
\cite{hazrat-wadsworth-2011} or \cite{tignol-wadsworth-2015}.  Let $E$ be a graded division ring graded by 
a torsion-free abelian group~$\Gamma_E$.  That is, $E$ is a graded ring 
$E =\bigoplus_{\gamma\in \Gamma_E} E_\gamma$ such that each nonzero homogeneous element $e_\gamma$
in any $E_\gamma$ is a unit of $E$.  It follows (as $\Gamma_E$ can be totally ordered) that the group of 
units~$E^*$ of $E$ consists entirely of its nonzero homogeneous elements.  
For $e\in E^*$, the degree $\opn{deg}(e)$ is the $\gamma\in \Gamma_E$ such that $e\in E_\gamma$.  Clearly
the degree map $\deg\colon E^* \to \Gamma_E$ is  a group epimorphism with kernel $E_0^*$.  Thus, $\opn{deg}$
induces an isomorphism
\begin{equation}
\label{degiso}
E^*/E_0^* \, \cong\, \Gamma_E.
\end{equation}
Denote by $E'$
the multiplicative commutator subgroup $[E^*, E^*]$  of $E^*$.  Since commutators of homogeneous elements must 
have  degree $0$, it follows that  $E' \subseteq E_0^*$.  Clearly, $E_0$ is a division ring. We 
always assume that each $E_\gamma$ is nonzero. So, $E_\gamma$~is a one-dimensional left and right
$E_0$-vector space.  

Let $T = Z(E)$, the center of $E$. Then (as $\Gamma_E$ can be totally ordered), $T$ is a graded subring of 
$E$, i.e., $T = \bigoplus_{\delta\in \Gamma_T} T_\delta$, where $\Gamma_T \subseteq \Gamma _E$ and 
each $T_\delta \subseteq E_\delta$.  Also, $T$ is a graded field, i.e., a commutative graded division ring.
Since $E$ is a graded $T$-module, it is  free as a $T$-module, with rank denoted~$[E:T]$.   Then, 
$[E:T] = [E_0:T_0] \cdot |\Gamma_E:\Gamma_T|$.  
We call $E$ a {\it{graded division algebra}} when (in addition to $\Gamma_E$ torsion-free) $[E:T] < \infty$.
 We say that the graded division algebra $E$ is {\it{unramified}} if $\Gamma_E = \Gamma_T$ 
 (so $[E:T] = [E_0: T_0]$);  $E$ is {\it{totally ramified}} if $|\Gamma_E:\Gamma_T| = [E:T]$ (so $E_0 = T_0$); and 
$E$ is {\it{semiramified}} if $[E_0: T_0] = |\Gamma_E:\Gamma_T|$ and $E_0$ is commutative. 

The graded
field $T$ is an integral domain;  so, $T$ has a quotient field, denoted $q(T)$. 
The quotient division ring $q(E)$ of $E$ is its ring of central quotients $q(E) = E \otimes _T q(T)$.  This 
$q(E)$ is a division ring with center $Z(q(E)) = q(T)$, and $[q(E): q(T)] = [E:T] < \infty$. 
The Schur index of $E$ is defined to be $\text{ind}(E) = \sqrt{[E:T]\,}$; so $\text{ind}(E) = \text{ind} (q(E)) \in \nn$. 

The reduced norm map $\Nrd_{q(E)}\colon q(E) \to q(T)$ sends $E$ into $T$. We denote by $\Nrd_E$
the restriction of $\Nrd_{q(E)}$ to $E$.  This map is compatible with the grading  on $E$ in that for any
$\gamma\in\Gamma_E$, we have  $\Nrd_E(E_\gamma) \subseteq T_{m\gamma}$, where 
$m = \text{ind}(E)$; moreover, if $\Nrd_E(b) \in T_\delta \setminus\{0\}$, then $\delta = m\rho$ for some 
$\rho \in \Gamma_E$ and $b\in E_\rho$.  In particular, $\Nrd_E(E_0) \subseteq T_0$, and if $\Nrd_E(c) \in T_0$,
then $c\in E_0$.  The field~$T_0$ is often a proper subfield
of $Z(E_0)$.  But $Z(E_0)$ is always abelian Galois over $T_0$.

The Whitehead group of $E$ is $K_1(E) = E^*/E'$. Let $\TK(E)$ be the  torsion subgroup of $K_1(E)$,
i.e., $\TK(E) =\tau(E^*/E')$. Let $\SK(E) = \Eone/E'$, 
where $\Eone= \{ a \in E^*\mid \Nrd_E(a) = 1\}$.  Then, $\SK(E) \subseteq \TK(E)$, as $\SK(E)$ has 
$\text{ind}(E)$-torsion by 
\cite[Prop.~3.3(ii)]{hazrat-wadsworth-2011}.  Let $\Emu$ be the subgroup of $E^*$ containing $E'$ such that
$\Emu / E' = \TK(E)$; equivalently, ${\Emu = \{ a\in E^*\!\mid\! \Nrd_E(a) \in \mu_{T_0}\}}$.  We have the inclusions:
$$
[E_0^*, E_0^*] \, \subseteq \, [E_0^*, E^*] \, \subseteq \,E' \subseteq  \, \Eone \, \subseteq \,
\Emu\, \subseteq \, E_0^*.
$$
Note also that $E^*/\Emu$ is torsion-free, as  $E^*/\Emu \cong (E^*/E')\big/(\Emu/E') =
(E^*/E')\big/ \tau(E^*/E')$.

\smallskip

In the valued setting, let $D$ be a division algebra finite-dimensional over its center $K$. Let $v$ be a valuation on $D$.
We will assume throughout that the restriction of $v$ to $K$ is Henselian. 
Let~$V_D$~be the valuation ring of $v$ on $D$; let $M_D$ be the maximal ideal of $V_D$ and $\overline D=V_D/M_D$ 
the residue division algebra; and let $\Gamma_D$ be the value group, which is a torsion-free abelian group, written 
additively.  The group of units  of $V_D$ is $V_D^* = \{a\in D^*\mid v(a) = 0\}$.
 Likewise let  $V_K$, $M_K$, 
 $\overline K$, $\Gamma_K$, and $V_K^*$ be the corresponding objects for $K$. 
 We will always assume that $D$ is {\it{tame}} (= tamely ramified over $K$).  This means that 
 $[D:K] = [\ov D: \ov K]|\Gamma_D:\Gamma_K|$, $Z(\ov D)$ is separable (hence abelian Galois) over $\ov K$, and 
 either $\charr(\ov K) = 0$ or $\charr(\ov K) = p>0$ and $p \nmid \ind(D)\big /\big (\ind(\ov D) [Z(\ov D):\ov K]\big )$.
 $D$ is said to be {\it {unramified}} if $\Gamma_D = \Gamma_K$ (so $[\ov D: \ov K] = [D:K])$; $D$ is {\it{totally ramified}}
 if $|\Gamma_D:\Gamma_K| = [D:K]$ (so $\ov D = \ov K$); and $D$ is  {\it{semiramified}} if 
 $[\ov D:\ov K] = |\Gamma_D:\Gamma_K| = \opn{ind}(D)$ and $\ov D$ is commutative.
 
 Let $\gr(D)$ be the associated graded ring of $D$ with respect to $v$.  This is defined as follows:
 For $\gamma \in \Gamma_D$, let 
 $$
 D^{\ge \gamma} \, = \, \{ d\in D^*\mid v(d) \ge \gamma\} \cup \{0\} \quad \text {and} \quad
 D^{> \gamma} \, = \, \{ d\in D^*\mid v(d) > \gamma\} \cup \{0\} \, \subseteq \, D^{\ge \gamma}. 
 $$
 Then set 
 $$
 \gr(D) = \underset{\gamma\in \Gamma_D}{\textstyle{\bigoplus}} \gr(D)_\gamma, \quad \text{where}\ \quad \ 
 \gr(D)_\gamma\, = \, D^{\ge\gamma} /D^{> \gamma}.
 $$
 For $a\in D^*$, let $\wt a$ denote the image $a + D^{>v(a)} $ of $a$ in $\gr(D)_{v(a)}$.
 With the well-defined multiplication on homogeneous elements of $\gr(D)$ given 
 by $\wt a \cdot\wt b = \wt{\,ab\,}$, this $\gr(D)$ is a graded division ring.  Note that 
 $\Gamma_{\gr(D)} = \Gamma_D$ and $\gr(D)_0 = \ov D$. Furthermore,
 $\gr(K)$, defined analogously, is the center of~$\gr(D)$ and 
 $[\gr(D):\gr(K)] = [D: K]$. A further property (which we will not use),  is that $\gr(D)$~determines $D$ up to 
 isomorphism.  Note that the homomorphism $D^* \to \gr(D)^*$ given by $a \mapsto \wt a$
 has kernel $1+ M_D$. 
 
 Let $\Nrd_D\colon D \to K$ be the reduced norm map.
 Put $D'=[D^*,D^*]$, ${D^{(1)}=\{d\in D^*\mid \Nrd_D(d)=1\}}$, and $\operatorname{TK}_1(D)=\tau(D^*/D')$. 
 Let $\Dmu$ be the inverse image of $\operatorname{TK}_1(D)$ in $D^*$, 
 so that $D'\subseteq \Dmu$ and $\Dmu/D'=\operatorname{TK}_1(D)$. Note that 
$\Dmu = \{d\in D^*\mid \operatorname{Nrd}_D(d)\in \mu_K\}$.  The reduced norm is compatible with the 
valuation, in that $v(\Nrd_D(a)) = \opn{ind}(D)\,v(a)$ for all $a\in D^*$.  By \cite[Cor.~4.4]{hazrat-wadsworth-2011}, there 
is also compatibility with $\Nrd_{\gr(D)}$, in that for any $a\in D^*$,
\begin{equation}
\label{eq:Nrdcompatibility}
\Nrd_{\gr(D)}(\wt a) \, = \, \wt{\Nrd_D(a)}.
\end{equation}  
There is also the useful identity, by \cite[Cor.~4.7]{hazrat-wadsworth-2011},
\begin{equation}
\label{eq:Nrd(1+M_D)}
\Nrd_D(1+M_D) \, = \, 1+M_K.
\end{equation}

\section{The group $\operatorname{TK}_1$ of graded division algebras}\label{sec:graded-tk1}

We first work in the graded setting.  We give the analogue, for $\operatorname{TK}_1$, of the graded calculations 
in \cite{hazrat-wadsworth-2011} for $\operatorname{SK}_1$.  These formulas will be applied later in  
Cor~\ref{cor:TK_1(D)cases} easily yielding corresponding formulas for $\TK(D)$
of a tame division algebra $D$ over a Henselian field.  The diagram for the next theorem is the analogue for 
$\TK$ of the diagram for $\SK$ of a graded division algebra given in \cite[Th.~3.4]{hazrat-wadsworth-2011}, which was in 
turn based on the diagram for $\SK$ of a tame division algebra over a Henselian field given in \cite{ershov-1982}.

Throughout this section, let $E$ be a graded division algebra with center the graded field $T$,
 as described in Section~2.  The basic relationship  
between $\TK(E)$ and $\SK(E)$ is as follows:

\begin{proposition}
\label{prop:TKvsSKforE}
For any graded division algebra $E$ with center $T$, there is a short exact sequence:
\begin{equation}
\label{SESforTK(E)}
1 \longrightarrow  \SK(E) \longrightarrow \TK(E) \longrightarrow \Nrd_{E}(E_0^*)\cap \mu_{T_0} \longrightarrow 1
\end{equation}
\end{proposition}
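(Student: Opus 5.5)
The plan is to let the reduced norm map $\Emu$ onto the third term and compute its kernel. By the description in Section~2, $\Emu = \{a\in E^*\mid \Nrd_E(a)\in\mu_{T_0}\}$, and the chain of inclusions there gives $\Emu\subseteq E_0^*$. So we consider the homomorphism
$$
\Nrd_E|_{\Emu}\colon \Emu \longrightarrow \mu_{T_0}.
$$
Its image lies in $\Nrd_E(E_0^*)\cap\mu_{T_0}$, and we will check that this is the whole image.

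Surjectivity onto $\Nrd_E(E_0^*)\cap\mu_{T_0}$ is immediate. Given $\zeta = \Nrd_E(c)$ with $c\in E_0^*$ and $\zeta\in\mu_{T_0}$, the element $c$ satisfies the defining condition of $\Emu$, so $c\in\Emu$ and it maps to $\zeta$. We do not need the grading property saying that $\Nrd_E(c)\in T_0$ forces $c\in E_0$, since $\Emu\subseteq E_0^*$ is already available. Even without it, that grading property would show the image of $\Nrd_E$ on $\Emu$ lies in $\Nrd_E(E_0^*)$.

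The kernel of $\Nrd_E|_{\Emu}$ is $\Emu\cap\Eone = \Eone$. Since $E'\subseteq\Eone\subseteq\Emu$ and $\Nrd_E$ kills $E'$ (as $\Nrd_E$ is multiplicative into the commutative group $T^*$), the map factors through $\Emu/E' = \TK(E)$. The induced map $\TK(E)\to\Nrd_E(E_0^*)\cap\mu_{T_0}$ is therefore surjective with kernel $\Eone/E' = \SK(E)$. This gives the exact sequence (\ref{SESforTK(E)}), with the first map the inclusion $\SK(E)\subseteq\TK(E)$ noted in Section~2.

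The only point needing care is the equality $\Emu = \{a\in E^*\mid \Nrd_E(a)\in\mu_{T_0}\}$, which the paper takes as known in Section~2. If it must be justified, the argument is as follows.
\begin{itemize}
\item[(i)] Suppose $a^n\in E'$ for some $n\ge 1$. Then $\Nrd_E(a)^n = 1$, so $\Nrd_E(a)$ is a root of unity in $T$. Roots of unity in $T$ are homogeneous of torsion degree, hence of degree $0$ because $\Gamma_T$ is torsion-free, so $\Nrd_E(a)\in\mu_{T_0}$.
\item[(ii)] Conversely, suppose $\Nrd_E(a)^n=1$. Then $a^n\in\Eone$, and $\SK(E)$ is $\ind(E)$-torsion by \cite[Prop.~3.3(ii)]{hazrat-wadsworth-2011}. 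Hence $a^{n\,\ind(E)}\in E'$, so the class of $a$ lies in $\TK(E)$.
\end{itemize}
With this equality in hand, the sequence is exact as claimed.
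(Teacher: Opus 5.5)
Your proposal is correct and follows essentially the paper's argument: restrict $\Nrd_E$ to $\Emu$, show its image is exactly $\Nrd_E(E_0^*)\cap\mu_{T_0}$ and its kernel is $\Eone$, then pass to the quotient by $E'$. The one difference is in the surjectivity step. You quote the characterization $\Emu=\{a\in E^*\mid \Nrd_E(a)\in\mu_{T_0}\}$ and justify it separately, whereas the paper gets $b\in\Emu$ from $b^k\in\Eone$ together with torsion-freeness of $E^*/\Emu$; this is the same fact stated differently.
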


\begin{proof}
Recall the inclusions noted in Section~2 above: 
$E' \subseteq E^{(1)} \subseteq \Emu \subseteq E_0^*$; hence, ${\SK(E) \subseteq \TK(E)}$.  
Recall also that $E^*/\Emu$ is torsion-free.
Now, take any 
$a \in E^{(\mu)}$.  Then, $a\in E_0^*$ and $a^m \in E'$ for some $m\in \nn$. Therefore, 
$1 = \Nrd_E(a^m) = (\Nrd_E(a))^m$.
Also, $\Nrd_E(a) \in T_0$ as $a \in E_0$.  This shows that 
$\Nrd_E(E^{(\mu)}) \subseteq \Nrd_{E}(E_0^*)\cap \mu_{T_0}$.
For the reverse inclusion, take any $b \in E_0^*$, such that $\Nrd_E(b) \in \mu_{T_0}$.  Then, for some 
positive integer $k$,
$ 1= \Nrd_E(b)^k = \Nrd_E(b^k)$, i.e., $b^k \in E^{(1)} \subseteq E^{(\mu)}$.  Therefore, $b\in \Emu$, as
$E^*/\Emu$ is torsion-free.  
Thus, $\Nrd_E$ maps $\Emu$ onto  $\Nrd_{E}(E_0^*)\cap \mu_{T_0}$.  This surjectivity implies exactness at the
right term of (\ref{SESforTK(E)}), and exactness at the center and left terms is clear from the inclusions noted at the 
beginning of the proof. 
\end{proof}

The proposition shows that $\TK(E)/\SK(E)$ is a locally cyclic abelian group (as is every subgroup of $\mu_{T_0}$).  
However, this factor group is often very difficult to calculate in practice.  Therefore, we proceed to analyze 
$\TK(E)$ in parallel with the known theory for $\SK(E)$  given in \cite{hazrat-wadsworth-2011} and also in 
\cite[Ch.~11]{tignol-wadsworth-2015}.

Before stating the main theorem for $\TK(E)$ we need some further notation.  For a graded division algebra
$E$, let $\mathcal I_E=[E_0^*,E^*]$.  
Let $T = Z(E)$ and $Z_0 = Z(E_0)$, which is abelian Galois over~$T_0$.  Let $G =\opn{Gal}(Z_0/T_0)$, and let
$N_{Z_0/T_0}\colon Z_0 \to T_0$ be the field norm.  Let 
$$A \,=\,  \Nrd_{E_0}(E_0^*) \,\subseteq \,Z_0^*.
$$
  Then, as each
$\sigma\in G$ extends to an automorphism of $E_0$ (conjugation by some element of $E^*$), $\sigma$~restricts 
to an automorphism of $A$.  Let 
$$
        I_{G}(A)=\,\langle \,a\,\sigma(a)^{-1}\!\mid a\in A,\ \sigma\in G\rangle \quad \text{and} \quad
        A^{(\mu)}=\, \{a\in A\mid N_{Z_0/T_0}(a)\in\mu_{T_0}\}.
$$
Since for any $a\in A$ and $\sigma\in G$, we have $N_{Z_0/T_0}(\sigma(a)) = N_{Z_0/T_0}(a)$, it 
follows that ${I_G(A) \subseteq A^{(\mu)}}$.  Define  
$$
        \mathcal H_{\mu}(G,A)\,=\,A^{(\mu)}/I_{G}(A).
$$ 
The interplay between $\Nrd_E$ on $E_0$ and $\Nrd_{E_0}$ will be essential here.  Recall from 
\cite[Prop.~3.2(iv)]{hazrat-wadsworth-2011} that for any $a\in E_0$, 
\begin{equation}
 \label{E_0Nrdformula}
\Nrd_E(a) \, =  \,\big(N_{Z_0/T_0}(\Nrd_{E_0}(a))\big)^{d_E}\quad \text {where} \quad 
d_E \,=\, \text{ind}(E)\big/\big(\text{ind}(E_0)[Z_0:T_0]\big) \in \nn.
\end{equation}

\begin{theorem}
\label{thm:graded-tk1-diagram}
Let $  E = \bigoplus_{\gamma \in \Gamma} E_\gamma  $ be a graded division ring finite-dimensional over its 
center $  T  $. 
Let  $\widetilde{N}=N_{Z_0/T_0}\circ \mathrm{Nrd}_{E_0}:E_0^*\to T_0^*$. 
Then, the two rows and the column of the following diagram are exact.  
\begin{equation*}
\begin{tikzcd}[
  column sep=1.5em,
  row sep=2.5em,
  ampersand replacement=\&,
  cells={nodes={inner sep=6pt}}
]
\& \& 1 \arrow[d] \\
\& \& \ker(\widetilde{N})/\mathcal I_E \arrow[d, "\scriptstyle\iota"] \\
\big(\mathcal I_E\cap E_0^{(1)}\big)\big/E_0' \  \arrow[r]\ 
\& \ \mathrm{SK}_1(E_0) \ \arrow[r, "\scriptstyle\jmath"]\ 
\& \ E^{(\mu)}/\mathcal I_E\ 
    \arrow[r, "\scriptstyle\kappa_\mu"]
    \arrow[d, equal]
\& \ \mathcal H_{\mu}(G,A)\  \arrow[r]
\& \ 1 \\
\Gamma_E/\Gamma_T \wedge \Gamma_E/\Gamma_T\ 
  \arrow[rr, "\scriptstyle\alpha"]
\& \&
\ E^{(\mu)}/\mathcal I_E\ 
  \arrow[r, "\scriptstyle\overline{\varphi}"]
  \arrow[d, "\scriptstyle\overline{N}"]
\& \ \mathrm{TK}_1(E)\  \arrow[r]
\& \ 1 \\
\& \& \mu_{T_0} \cap \widetilde{N}(E_0^*) \arrow[d] \\
\& \& 1
\end{tikzcd}
\end{equation*}
\begin{center}
{$\text{Diagram\,1}$}
\end{center}
\end{theorem}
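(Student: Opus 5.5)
The plan is to prove exactness of the column, the upper row and the lower row separately. Throughout I use the inclusions $\mathcal I_E\subseteq E'\subseteq E^{(1)}\subseteq E^{(\mu)}\subseteq E_0^*$ from Section~2 and formula (\ref{E_0Nrdformula}), which says $\Nrd_E(a)=\widetilde N(a)^{d_E}$ for $a\in E_0^*$. I also use two facts from \cite[Th.~3.4]{hazrat-wadsworth-2011}. The first is that for $x\in E^*$, conjugation by $x$ restricts to some $\sigma_x\in G$ on $Z_0$, and every $\sigma\in G$ arises this way. The second is that $\Nrd_{E_0}(xcx^{-1})=\sigma_x(\Nrd_{E_0}(c))$ for $c\in E_0^*$.

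\textbf{Step 1: $\mathcal I_E\subseteq\ker\widetilde N$, and $\widetilde N$ maps $\mathcal I_E$ into $I_G(A)$ after $\Nrd_{E_0}$.} For $c\in E_0^*$ and $x\in E^*$ we have $\Nrd_{E_0}(xcx^{-1}c^{-1})=\sigma_x(a)a^{-1}$ with $a=\Nrd_{E_0}(c)$. Hence $\Nrd_{E_0}(\mathcal I_E)\subseteq I_G(A)$. Since $N_{Z_0/T_0}$ is $G$-invariant, $\mathcal I_E\subseteq\ker\widetilde N$.

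\textbf{Step 2: the column.} First, $\ker\widetilde N\subseteq E^{(\mu)}$ because $\Nrd_E=\widetilde N^{d_E}$ on $E_0^*$; so $\iota$ is the injective map induced by this inclusion. Next, if $a\in E^{(\mu)}$ then $\widetilde N(a)^{d_E}=\Nrd_E(a)\in\mu_{T_0}$, so $\widetilde N(a)$ is torsion in $T_0^*$. Thus $\widetilde N$ induces $\overline N\colon E^{(\mu)}/\mathcal I_E\to\mu_{T_0}\cap\widetilde N(E_0^*)$, with kernel $\ker(\widetilde N)/\mathcal I_E$. For surjectivity, suppose $b\in E_0^*$ with $\widetilde N(b)\in\mu_{T_0}$. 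Then $\Nrd_E(b)=\widetilde N(b)^{d_E}\in\mu_{T_0}$, so $b\in E^{(\mu)}$.

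\textbf{Step 3: the upper row.} The map $\jmath$ is induced by $E_0^{(1)}\subseteq\ker\widetilde N\subseteq E^{(\mu)}$, which is well defined because $E_0'\subseteq\mathcal I_E$. The map $\kappa_\mu$ is induced by $\Nrd_{E_0}$. For $a\in E^{(\mu)}$, Step~2 gives $N_{Z_0/T_0}(\Nrd_{E_0}(a))=\widetilde N(a)\in\mu_{T_0}$, so $\Nrd_{E_0}(a)\in A^{(\mu)}$. By Step~1, $\kappa_\mu$ is well defined modulo $I_G(A)$.
\begin{itemize}
\item Surjectivity of $\kappa_\mu$: any $a\in A^{(\mu)}$ is $\Nrd_{E_0}(b)$ for some $b$ with $\widetilde N(b)\in\mu_{T_0}$, and then $b\in E^{(\mu)}$ as in Step~2.
\item Exactness at $E^{(\mu)}/\mathcal I_E$: let $\Nrd_{E_0}(b)\in I_G(A)$, say $\Nrd_{E_0}(b)=\prod_i a_i\sigma_i(a_i)^{-1}$ with $a_i=\Nrd_{E_0}(c_i)$. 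Choose $x_i\in E^*$ inducing $\sigma_i^{-1}$ and set $c=\prod_i c_i x_ic_i^{-1}x_i^{-1}\in\mathcal I_E$. Then $\Nrd_{E_0}(c)=\Nrd_{E_0}(b)$, so $bc^{-1}\in E_0^{(1)}$ and $b\mathcal I_E\in\operatorname{im}\jmath$. The reverse inclusion $\operatorname{im}\jmath\subseteq\ker\kappa_\mu$ is clear.
\item Kernel of $\jmath$: it is $(E_0^{(1)}\cap\mathcal I_E)/E_0'$ by definition.
\end{itemize}

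\textbf{Step 4: the lower row.} Here $\overline\varphi$ is induced by the projection $E^{(\mu)}\to E^{(\mu)}/E'=\TK(E)$. It is surjective, with kernel $E'/\mathcal I_E$. It remains to define $\alpha$ on $\Gamma_E/\Gamma_T\wedge\Gamma_E/\Gamma_T$ and show its image is exactly $E'/\mathcal I_E$. Define
\[
\alpha\bigl((\gamma+\Gamma_T)\wedge(\delta+\Gamma_T)\bigr)=[x_\gamma,x_\delta]\,\mathcal I_E,\qquad x_\gamma\in E_\gamma^*,\ x_\delta\in E_\delta^*.
\]
I expect this step to be the main obstacle, namely that $\alpha$ is well defined and has image $E'/\mathcal I_E$. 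This involves only $E'$ and $\mathcal I_E$, not $E^{(\mu)}$, so it is literally the bottom row of \cite[Th.~3.4]{hazrat-wadsworth-2011}. I would import that argument, recalling its key points:
\begin{itemize}
\item Changing $x_\gamma$ by a factor from $E_0^*$ changes the commutator only by elements of $\mathcal I_E$.
\item Elements of $T^*$ are central, so the value depends only on the classes modulo $\Gamma_T$.
\item The identity $[xy,z]=x[y,z]x^{-1}[x,z]$, together with $E^*/\mathcal I_E$ being abelian modulo $E_0^*$-conjugation, gives bimultiplicativity and $[x,x]=1$.
\item Every commutator of homogeneous units is of the form $[x_\gamma,x_\delta]$, so the image is all of $E'/\mathcal I_E$.
\end{itemize}
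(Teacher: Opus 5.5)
Your proposal is correct and follows essentially the same route as the paper. The column and upper row come from $\Nrd_E=\widetilde N^{d_E}$ on $E_0^*$ together with $\Nrd_{E_0}(\mathcal I_E)=I_G(A)$, which you prove directly where the paper simply quotes it, and the lower row is imported from \cite[Th.~3.4 and Lemma~3.5]{hazrat-wadsworth-2011}; one small slip is that in the exactness argument at $E^{(\mu)}/\mathcal I_E$ you should choose $x_i$ inducing $\sigma_i$, not $\sigma_i^{-1}$, since $\Nrd_{E_0}(c_ix_ic_i^{-1}x_i^{-1})=a_i\,\sigma_{x_i}(a_i)^{-1}$.
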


The mappings  $\jmath$, $\kappa_\mu$, $\iota$, $\ov N$, $\alpha$,   and $\ov{\varphi}$ in the diagram will be defined 
during the proof.

\begin{proof}
We first prove the exactness of the upper row, as in the proof of the
upper row in \cite[Th.~3.4]{hazrat-wadsworth-2011}.  The inclusions
$E_0^{(1)}\subseteq E^{(1)}\subseteq E^{(\mu)}$  and $E_0' \subseteq \mathcal I_E$ yield a natural map 
$$
\jmath\colon\operatorname{SK}_1(E_0)\longrightarrow E^{(\mu)}/\mathcal I_E.
$$
Clearly, $\opn{ker} j = \big(\mathcal I_E\cap E_0^{(1)}\big)\big/E_0' $.

We now define $\kappa_\mu$.  If $a\in E^{(\mu)}$, then $a\in E_0^*$, so
$\operatorname{Nrd}_{E_0}(a)\in A$.  Moreover 
$N_{Z_0/T_0}(\operatorname{Nrd}_{E_0}(a)) = \widetilde N(a)\in\mu_{T_0}$, 
so $\operatorname{Nrd}_{E_0}(a)\in A^{(\mu)}$.  Since
$\operatorname{Nrd}_{E_0}(\mathcal I_E)=I_{G}(A)$, the reduced norm
induces a well-defined homomorphism
$$
        \kappa_\mu:E^{(\mu)}/\mathcal I_E
        \longrightarrow
        \mathcal H_\mu(G,A),\qquad
        a\mathcal I_E\longmapsto
        \operatorname{Nrd}_{E_0}(a)I_{G}(A).
$$

We show that map $\kappa_\mu$ is onto:  Let $u\in A^{(\mu)}$, and  choose any 
$a\in E_0^*$ such that $\Nrd_{E_0}(a)=u$.  Since
$u\in A^{(\mu)}$, we have $\widetilde N(a)=N_{Z_0/T_0}(u)\in\mu_{T_0}$. Hence,
as  $\operatorname{Nrd}_E(a)=\widetilde N(a)^{d_E}$ by (\ref{E_0Nrdformula}), 
we have $\Nrd_E(a)\in \mu_{T_0}$.  Choose
$m\geq1$ such that $\operatorname{Nrd}_E(a)^m=1$.  Then
$a^m\in E^{(1)}$.  Since $E^{(1)}/E'$ is torsion, there is $r\geq1$
with $a^{mr}\in E'$.  Thus $aE'$ is torsion in $E^*/E'$, so
$a\in E^{(\mu)}$.  Hence, $uI_{G}(A)$ lies in the image of
$\kappa_\mu$.

Finally, let $a\mathcal I_E\in\ker\kappa_\mu$.  Then $\Nrd_{E_0}(a)\in I_{G}(A) = \Nrd_{E_0}(\mathcal I_E)$. 
Choose $c\in\mathcal I_E$ such that $\Nrd_{E_0}(c)=\Nrd_{E_0}(a)$. Then $ac^{-1}\in E_0^{(1)}$, and 
$a\mathcal I_E=ac^{-1}\mathcal I_E$. Thus every element of $\ker\kappa_\mu$ lies in the image of $\jmath$.
The reverse inclusion is clear.  Hence $\ker\kappa_\mu=\operatorname{im}\jmath$. 
This proves the exactness of the upper row.

We next prove that the  column of the diagram is exact.  The inclusion
$\mathcal I_E\subseteq\ker\widetilde N$ is exactly the one used in
\cite[Th.~3.4]{hazrat-wadsworth-2011}.  Also, if $a\in\ker\widetilde N$,
then $\operatorname{Nrd}_E(a)=\widetilde N(a)^{d_E}=1$ by (\ref{E_0Nrdformula}); hence
$\ker\widetilde N\subseteq E^{(1)}\subseteq E^{(\mu)}$.  Thus,
 $$
 \iota\colon \ker\widetilde N/\mathcal I_E\,\longrightarrow \,E^{(\mu)}/\mathcal I_E
 $$ 
 is well-defined and injective.

We next define $\overline N$.  Let $g\in E^{(\mu)}$.  Then $g\in E_0^*$,
and there is $m\geq 1$ such that $g^m\in E'$.  Therefore
$$
        1=\operatorname{Nrd}_E(g^m)
        =\operatorname{Nrd}_E(g)^m
        =\widetilde N(g)^{d_E m},
$$
again by (\ref{E_0Nrdformula}).  Hence
$\widetilde N(g)\in\mu_{T_0}$, and so $\widetilde N(E^{(\mu)}) \subseteq \mu_{T_0}\cap\widetilde N(E_0^*)$. Since 
$\mathcal I_E\subseteq\ker\widetilde N$, we get a well-defined induced map
\begin{equation*}
        \overline N\colon E^{(\mu)}/\mathcal I_E
        \longrightarrow
        \mu_{T_0}\cap\widetilde N(E_0^*),
        \quad \text{given by} \quad
        a\mathcal I_E\longmapsto\widetilde N(a).
\end{equation*}

We show this map is onto:  Let $b\in\mu_{T_0}\cap\widetilde N(E_0^*)$.  Choose
$a\in E_0^*$ with $\widetilde N(a)=b$.  Since $b$ is torsion, $b^k=1$
for some $k\geq1$.  Hence $\widetilde N(a^k)=1$, so
$a^k\in\ker\widetilde N\subseteq E^{(1)}\subseteq E^{(\mu)}$.  
Since $E^*/\Emu$ is torsion-free, it follows that 
 $a\in E^{(\mu)}$, and $\overline N(a\mathcal I_E)=b$.

Finally, $\ker\overline N = (E^{(\mu)}\cap\ker\widetilde N)/\mathcal I_E = \ker\widetilde N/\mathcal I_E$, because 
$\ker\widetilde N\subseteq E^{(\mu)}$.  This proves the exactness of the column.

It remains to check the lower row.  We first describe the map $\alpha$.
For this, note that  the isomorphism $\ov\degg^{\,-1}\colon \Gamma_E \to E^*/ E_0^*$ maps 
$\Gamma_T$ onto $T^*E_0^*/E_0^*$; so it induces an isomorphism $\beta \colon \Gamma_E/\Gamma_T 
\to  E^*/T^*E_0^*$.  Let $B = E^*/T^*E_0^*$.  By \cite[Lemma~3.5]{hazrat-wadsworth-2011}, there is a well-defined 
epimorphism $\psi\colon B\wedge B \to E'/\mathcal I_E$ given by 
${(x\,T^*E_0^*)\wedge (y\,T^* E_0^*) \mapsto [x,y]\,\mathcal I_E}$, 
for any nonzero $x, y \in E^*$.  The map~$\alpha$ is the composition $\psi \circ(\beta\wedge\beta)$, i.e., 
$$
\begin{aligned}
&\alpha\colon \Gamma_E/\Gamma_T\wedge \Gamma_E /\Gamma_T \to E^{(\mu)} /\mathcal I_E \quad 
\text {is the well-defined  map with image $E'/\mathcal I_E$ given by} \\
&\quad
(\gamma +\Gamma_T)\wedge (\delta + \Gamma_T)
\mapsto [x,y] \mathcal I_E,
\quad \text{for any $\gamma, \delta \in \Gamma_E$
and any nonzero $x\in E_\gamma$ and $y \in E_\delta$}.
\end{aligned}
$$
The map 
$$
\overline\varphi:E^{(\mu)}/\mathcal I_E\,
        \longrightarrow \,E^{(\mu)}/E'\,=\,\operatorname{TK}_1(E)
        $$
         is the quotient map, so it is surjective  with 
         $\ker\overline\varphi=E'/\mathcal I_E =\opn{im}(\alpha)$.  Thus, the lower row of the diagram is exact.
\end{proof}

\begin{remark}
\label{rem:torsion-norm-row}
The upper row of Diagram $1$ is  the torsion-norm analogue of the corresponding
\cite{hazrat-wadsworth-2011} row for $\operatorname{SK}_1$.  Namely, 
$\widehat H^{-1}(G,A) = \ker(N_{Z_0/T_0}:A\to T_0^*)/I_{G}(A)$ is replaced here
by $\mathcal H_\mu(G,A) = A^{(\mu)}/I_{G}(A)$. Thus $N_{Z_0/T_0}$ gives an exact sequence
$$
        1\longrightarrow
        \widehat H^{-1}(G,A)
        \longrightarrow
        \mathcal H_\mu(G,A)
        \longrightarrow
        \mu_{T_0}\cap\widetilde N(E_0^*)
        \longrightarrow 1.
$$
So the extra information in passing from $\operatorname{SK}_1$ to
$\operatorname{TK}_1$ is precisely the torsion part of the norm image.  If
$\alpha$ is trivial, this upper row of the diagram gives a direct description of
$\operatorname{TK}_1(E)$.
\end{remark}

\medskip

We now take the quotient of the middle column in Diagram~1 by the image of
$\alpha$.  Since ${\operatorname{im}\alpha=E'/\mathcal I_E}$, this quotient
is precisely $\operatorname{TK}_1(E)=E^{(\mu)}/E'$.  The resulting exact
sequence gives a more compact form of the lower portion of Diagram 1:

\begin{corollary}
\label{cor:compressed-tk1-sequence}
\textit{Let $\mathcal C_E=\widetilde N(E')$. Equivalently, $\mathcal C_E = (\overline N\circ\alpha)
        \bigl(\Gamma_E/\Gamma_T\wedge\Gamma_E/\Gamma_T\bigr)$. Then there is a short exact sequence}
\begin{equation}
\label{eq:compressed-tk1-sequence}
        1\longrightarrow
        \frac{\ker\widetilde N}{E'\cap\ker\widetilde N}\,
        \longrightarrow
        \operatorname{TK}_1(E)\,
        \overset{\Psi_\mu}{\longrightarrow} \,
        \frac{\mu_{T_0}\cap\widetilde N(E_0^*)}{\mathcal C_E}
        \longrightarrow 1 .
\end{equation}
\textit{The map $\Psi_\mu$ is given by $\Psi_\mu(a\,E')=\widetilde N(a)\,\mathcal C_E$, where $a\in E^{(\mu)}$.}

\end{corollary}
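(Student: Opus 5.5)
We will obtain the sequence by quotienting the middle column of Diagram~1 by $\opn{im}\alpha = E'/\mathcal I_E$, as suggested just before the statement.

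First we check the two descriptions of $\mathcal C_E$ agree. By Theorem~\ref{thm:graded-tk1-diagram}, $\alpha$ has image $E'/\mathcal I_E$, and $\ov N$ is induced by $\widetilde N$ on $E^{(\mu)}$. Since $E'\subseteq E^{(\mu)}$, we get $(\ov N\circ\alpha)(\Gamma_E/\Gamma_T\wedge\Gamma_E/\Gamma_T) = \widetilde N(E')$. In particular $\mathcal C_E \subseteq \mu_{T_0}\cap\widetilde N(E_0^*)$, so the target of $\Psi_\mu$ makes sense.

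Next we define $\Psi_\mu$ and prove it is onto. For $a\in E^{(\mu)}$, set $\Psi_\mu(aE') = \widetilde N(a)\,\mathcal C_E$. This is well defined because changing $a$ by an element of $E'$ changes $\widetilde N(a)$ by an element of $\widetilde N(E')=\mathcal C_E$. Equivalently, $\Psi_\mu$ is the map induced by $\ov N$ on the quotients
$$E^{(\mu)}/\mathcal I_E \big/ (E'/\mathcal I_E) \longrightarrow \big(\mu_{T_0}\cap\widetilde N(E_0^*)\big)\big/\ov N(E'/\mathcal I_E).$$
Surjectivity of $\Psi_\mu$ is then immediate from the surjectivity of $\ov N$ proved in Theorem~\ref{thm:graded-tk1-diagram}.

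The main step is identifying the kernel. We have $aE'\in\ker\Psi_\mu$ if and only if $\widetilde N(a)=\widetilde N(e)$ for some $e\in E'$, that is, $ae^{-1}\in\ker\widetilde N$. So $\ker\Psi_\mu = (\ker\widetilde N)E'/E'$. Since $\ker\widetilde N\subseteq E^{(\mu)}$ (shown in the proof of the column), this is a subgroup of $\TK(E)$. By the second isomorphism theorem,
$$(\ker\widetilde N)E'/E' \;\cong\; \ker\widetilde N\big/(E'\cap\ker\widetilde N),$$
and the left map of the sequence is induced by the inclusion $\ker\widetilde N\hookrightarrow E^{(\mu)}$, which gives injectivity on the left.

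The only care needed is bookkeeping: we must apply $\widetilde N$ to $E'$ itself, not merely to $E'/\mathcal I_E$. This is legitimate because $\mathcal I_E\subseteq\ker\widetilde N$, so $\widetilde N$ factors through $E^{(\mu)}/\mathcal I_E$. No deeper obstacle is expected.
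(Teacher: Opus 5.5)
Your proposal is correct and is essentially the paper's argument. The paper places the exact rows $1\to E'\cap\ker\widetilde N\to E'\to\mathcal C_E\to 1$ and $1\to\ker\widetilde N\to E^{(\mu)}\to\mu_{T_0}\cap\widetilde N(E_0^*)\to 1$ in a diagram whose columns are inclusions and takes the exact sequence of cokernels, which is the same bookkeeping you do by hand with the second isomorphism theorem, using the same two facts from the proof of Theorem~\ref{thm:graded-tk1-diagram}: $\ker\widetilde N\subseteq E^{(\mu)}$ and $\widetilde N(E^{(\mu)})=\mu_{T_0}\cap\widetilde N(E_0^*)$.
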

\begin{proof}
Consider the following commutative diagram:
$$
\begin{tikzcd}[column sep=large,row sep=large]
1 \arrow[r]
& E'\cap \ker(\wt N) \arrow[r] \arrow[d,hook]
& E' \arrow[r,"\wt N"] \arrow[d,hook]
& \mathcal C_E \arrow[r] \arrow[d,hook]
& 1 \\
1 \arrow[r]
& \ker(\wt N)\arrow[r]
& \Emu  \arrow[r,"\wt N"]
& \mu_{T_0} \cap \wt N(E_0^*)  \arrow[r]
& 1 
\end{tikzcd}
$$
In this  diagram, the columns are all inclusions.  The upper row is clearly exact. The proof for 
Th.~\ref{thm:graded-tk1-diagram} 
of exactness of the column in Diagram 1 showed that $\wt N$ maps $\Emu$ onto  $\mu_{T_0} \cap \wt N(E_0^*)$.
Therefore, the lower row of this diagram is exact.  By the $5$-Lemma, there is a short exact sequence of 
cokernels of the columns in the diagram.  This short exact sequence is (\ref{eq:compressed-tk1-sequence}).  
\end{proof}

\begin{remark}
\label{rem:compressed-sk1-comparison}
The same construction applied to the \cite{hazrat-wadsworth-2011} diagram for
$\operatorname{SK}_1(E)$ gives the analogous exact sequence
\begin{equation}\label{eq:compressed-sk1-sequence}
        1\longrightarrow
        \frac{\ker\widetilde N}{E'\cap\ker\widetilde N}\,
        \longrightarrow
        \operatorname{SK}_1(E)
        \,\overset{\Psi_1}{\longrightarrow}\,
        \frac{\mu_{T_0}(d_E)\cap\widetilde N(E_0^*)}{\mathcal C_E}
        \longrightarrow 1,
\end{equation}
where $\mathcal C_E=\widetilde N(E')$, and $\Psi_1$ defined by $\Psi_1(aE')=\widetilde N(a)\mathcal C_E$, with 
$a\in E^{(1)}$.
Thus the compressed sequences \eqref{eq:compressed-sk1-sequence} for $\operatorname{SK}_1(E)$ and
\eqref{eq:compressed-tk1-sequence} for $\operatorname{TK}_1(E)$ have the same kernel and the same 
denominator
$\mathcal C_E$ on the right.  The only change is that $\mu_{T_0}(d_E)\cap\widetilde N(E_0^*)$ is replaced by $\mu_{T_0}\cap\widetilde N(E_0^*)$. Thus the passage from $\operatorname{SK}_1(E)$ to 
$\operatorname{TK}_1(E)$
amounts, in this formulation, to replacing the $d_E$-torsion in the norm
image by the full torsion in the norm image; the contribution from the grade
group commutator pairing is still measured by $\mathcal C_E=\widetilde N(E')$.
\end{remark}

\begin{theorem}
\label{theorem:graded-tk1-cases}
Let $E$ be a graded division ring finite-dimensional over its center $T$. Then the following assertions hold:
\begin{enumerate}[font=\normalfont]
\item[(i)] If $E$ is unramified, then $\mathrm{TK}_1(E)\cong\mathrm{TK}_1(E_0)$.

\item[(ii)] If $E$ is totally ramified, then $\mathrm{TK}_1(E)\cong \mu_{T_0}/\mu_{T_0}(e)$, where 
$e=\exp(\Gamma_E/\Gamma_T)$ and $\mu_{T_0}(e)=\{\zeta\in T_0^*\mid \zeta^e=1\}$.

\item[(iii)] If $E$ is semiramified, then $E_0$ is a field which is Galois over $T_0$. In this case $Z(E_0)=E_0$, so
$$
        G\,=\,\operatorname{Gal}(E_0/T_0)\cong\,\Gamma_E/\Gamma_T,
$$
and
$$
        \mathcal H_\mu(G,E_0^*)
        =
        \{a\in E_0^*\mid N_{E_0/T_0}(a)\in\mu_{T_0}\}/I_{G}(E_0^*).
$$
Then the following sequence is exact:
$$
        G\wedge G
        \longrightarrow
        \mathcal H_\mu(G,E_0^*)
        \longrightarrow
        \mathrm{TK}_1(E)
        \longrightarrow 1.
$$
Moreover, there is a natural short exact sequence
\begin{equation}
\label{SESsemiram}
        1\longrightarrow
        \mathrm{SK}_1(E)
        \longrightarrow
        \mathrm{TK}_1(E)
        \longrightarrow
        \mu_{T_0}\cap N_{E_0/T_0}(E_0^*)
        \longrightarrow 1.
\end{equation}

\item[(iv)] If $E$ has maximal graded subfields $L$ and $K$ which are respectively unramified and totally ramified 
over $T$, then $E$ is semiramified, so $G=\operatorname{Gal}(E_0/T_0)$, and 
$\mathrm{TK}_1(E)\cong \mathcal H_\mu(G,E_0^*)$. In particular, there is a short exact sequence
\begin{equation}
\label{SESH^-1forTK_1}
        1\longrightarrow
        \widehat H^{-1}(G,E_0^*)
        \longrightarrow
        \mathrm{TK}_1(E)
        \longrightarrow
        \mu_{T_0}\cap N_{E_0/T_0}(E_0^*)
        \longrightarrow 1.
\end{equation}
\item[(v)]  If $\Gamma_T \cong \zz$, then $d_E =1$, and  there are exact sequences 
\begin{equation}
\label{ES:TK_1(E)Gamma_cyclic}
 1 \, \longrightarrow \, \big([E_0^*, E^*]\cap E_0^{(1)}\big)\big/E_0' \, \longrightarrow \,\SK(E_0) \, 
 \longrightarrow \, \TK(E) \, \longrightarrow \, H_{\mu}(G,A) \, \longrightarrow \,1
\end{equation} 
and
\begin{equation}
\label{SES:TK_1(E)Gamma_cyclic}
1 \, \longrightarrow \,  \ker(\wt N)/ [E_0^*, E^*] \, \longrightarrow \,  \TK(E)\, \longrightarrow \, 
 \mu_{T_0} \cap \widetilde{N}(E_0^*) \, \longrightarrow \,  1.  
\end{equation}
\end{enumerate}

\end{theorem}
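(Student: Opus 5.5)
Each part will be read off Diagram~1 (Th.~\ref{thm:graded-tk1-diagram}) and the compressed sequence (\ref{eq:compressed-tk1-sequence}), together with the facts about $\SK$ in each case that are known from \cite{hazrat-wadsworth-2011}.

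\emph{Part (i).} If $E$ is unramified, then $E^*=T^*E_0^*$, so $\Gamma_E/\Gamma_T=0$ and $\alpha$ is trivial. Also $E'=\mathcal I_E$, and $\mathcal I_E=[E_0^*,T^*E_0^*]=E_0'$. By \cite{hazrat-wadsworth-2011}, $Z_0=T_0$ and $\ind(E_0)=\ind(E)$, so $d_E=1$ and $\Nrd_E=\Nrd_{E_0}$ on $E_0$. Hence
\[
\Emu=\{a\in E_0^*\mid \Nrd_{E_0}(a)\in\mu_{T_0}\}=E_0^{(\mu)},
\]
and therefore $\TK(E)=\Emu/E'=E_0^{(\mu)}/E_0'=\TK(E_0)$.

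\emph{Part (ii).} If $E$ is totally ramified, then $E_0=T_0$, so $\ker\wt N$ is trivial on $E_0^*=T_0^*$, and $\wt N$ is the identity. By \cite{hazrat-wadsworth-2011}, $E'=\mu_{T_0}(e)$, where $e=\exp(\Gamma_E/\Gamma_T)$. Since $d_E=\ind(E)$, we have $\Emu=\{a\in T_0^*\mid a^{\ind(E)}\in\mu_{T_0}\}=\mu_{T_0}$. So $\TK(E)=\mu_{T_0}/\mu_{T_0}(e)$. Equivalently, this is (\ref{eq:compressed-tk1-sequence}) with $\mathcal C_E=\mu_{T_0}(e)$.

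\emph{Part (iii).} If $E$ is semiramified, the statement that $E_0$ is Galois with group $G\cong\Gamma_E/\Gamma_T$ is \cite[Prop.~2.5]{hazrat-wadsworth-2011} (or the corresponding result in \cite{tignol-wadsworth-2015}). Since $E_0$ is a field, $\SK(E_0)=1$ and $A=E_0^*$. The top row of Diagram~1 then gives $\Emu/\mathcal I_E\cong\mathcal H_\mu(G,E_0^*)$. Composing with the bottom row, and identifying $\Gamma_E/\Gamma_T\wedge\Gamma_E/\Gamma_T$ with $G\wedge G$, yields the exact sequence
\[
G\wedge G\longrightarrow\mathcal H_\mu(G,E_0^*)\longrightarrow\TK(E)\longrightarrow 1.
\]
For (\ref{SESsemiram}), start from Prop.~\ref{prop:TKvsSKforE}. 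In this case $\ind(E)=[E_0:T_0]$, so $d_E=1$ and $\Nrd_E|_{E_0}=N_{E_0/T_0}$. Thus the right-hand term of Prop.~\ref{prop:TKvsSKforE} is $\mu_{T_0}\cap N_{E_0/T_0}(E_0^*)$.

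\emph{Part (iv).} Here \cite{hazrat-wadsworth-2011} shows that $E$ is semiramified and that $E'=\mathcal I_E$. That is, $\alpha$ is trivial, which follows from the decomposition $E\cong$ the product of its inertial and totally ramified pieces. Therefore $\TK(E)\cong\Emu/\mathcal I_E\cong\mathcal H_\mu(G,E_0^*)$. The short exact sequence (\ref{SESH^-1forTK_1}) is then Remark~\ref{rem:torsion-norm-row} specialized to $A=E_0^*$ and $\wt N=N_{E_0/T_0}$.

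\emph{Part (v).} If $\Gamma_T\cong\zz$, then $\Gamma_E$ is torsion-free of rank one, so $\Gamma_E\cong\zz$ and $\Gamma_E/\Gamma_T$ is cyclic. Hence $\Gamma_E/\Gamma_T\wedge\Gamma_E/\Gamma_T=0$, which forces $\alpha=0$ and $E'=\mathcal I_E$. Substituting this into the top row and the column of Diagram~1 gives (\ref{ES:TK_1(E)Gamma_cyclic}) and (\ref{SES:TK_1(E)Gamma_cyclic}).

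The claim $d_E=1$ is the main obstacle. To prove it, pick $x\in E^*$ with $\deg x$ generating $\Gamma_E$. Then $E=E_0[x,x^{-1};\text{conj}]$, so $E$ is a twisted Laurent polynomial ring over $E_0$. Conjugation by $x$ induces a generator of $G$, of order $[Z_0:T_0]$. Moreover $x^{[Z_0:T_0]}$ acts on $E_0$ as an inner automorphism; after adjusting $x$ by that unit of $E_0$ it centralizes $E_0$, so its degree lies in $\Gamma_T$. This gives $|\Gamma_E:\Gamma_T|\le[Z_0:T_0]$. Combining this with $[E:T]=[E_0:T_0]\,|\Gamma_E:\Gamma_T|$ and $\ind(E)\ge\ind(E_0)[Z_0:T_0]$ forces $\ind(E)=\ind(E_0)[Z_0:T_0]$, hence $d_E=1$. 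If this argument becomes delicate, the fallback is to cite the corresponding statement in \cite[Ch.~11]{tignol-wadsworth-2015}.
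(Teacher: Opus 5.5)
Parts (i)--(iii) and the two exact sequences in (v) are fine and mostly follow the paper. In (i) you go through $Z_0=T_0$, $d_E=1$ and reduced norms to get $\Emu=E_0^{(\mu)}$, whereas the paper only uses that $E^*/E_0^*\cong\Gamma_T$ is torsion-free; both work. The genuine gap is your proof that $d_E=1$ in (v).

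Write $s=[Z_0:T_0]$, $\varphi=\opn{int}(x)$, and $\varphi^s|_{E_0}=\opn{int}(c)$. The element $y=c^{-1}x^s$ centralizes $E_0$, but that is much weaker than being central, so ``its degree lies in $\Gamma_T$'' does not follow. Indeed $xyx^{-1}=\varphi(c)^{-1}c\,y$, and nothing in your argument shows this factor is $1$ or can be made $1$.

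The step can be repaired within your approach:
\begin{itemize}
\item Since $\opn{int}(\varphi(c))=\varphi\circ\opn{int}(c)\circ\varphi^{-1}=\varphi^s=\opn{int}(c)$ on $E_0$, we have $\varphi(c)=cz$ with $z\in Z_0^*$, and so $xyx^{-1}=z^{-1}y$.
\item Iterating gives $c=\varphi^s(c)=c\,z\varphi(z)\cdots\varphi^{s-1}(z)=c\,N_{Z_0/T_0}(z)$, so $N_{Z_0/T_0}(z)=1$.
\item As $\varphi|_{Z_0}$ generates the cyclic group $G$, Hilbert~90 gives $w\in Z_0^*$ with $\varphi(w)=wz$.
\item Then $wy$ commutes with $E_0$ and with $x$, so it is a nonzero central element of degree $s\deg(x)$.
\end{itemize}
Hence $|\Gamma_E:\Gamma_T|\le s$, and your index count then gives $d_E=1$. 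Alternatively, cite \cite[Prop.~8.49]{tignol-wadsworth-2015}, as the paper does; the result is there, not in Ch.~11 as your fallback suggests.

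In (iv), the reason you give for $\alpha$ being trivial is wrong. A semiramified $E\neq T$ is never a tensor product over $T$ of an unramified and a totally ramified graded division algebra. The unramified factor $I$ would have $I_0$ central simple over $T_0$ inside the field $E_0$, forcing $I=T$ and $E$ totally ramified. The correct reason, as in the paper, is the dimension count
$[L:T]\,[K:T]\le[E_0:T_0]\,|\Gamma_E:\Gamma_T|=[E:T]=[L:T]\,[K:T]$.
This forces $\Gamma_K=\Gamma_E$, so the homogeneous representatives $x_\gamma,x_\delta$ in the definition of $\alpha$ can be chosen in the commutative $K$. Hence $\alpha$ is trivial and $E'=\mathcal I_E$.

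Given that, deriving the short exact sequence of (iv) from Remark~\ref{rem:torsion-norm-row} with $A=E_0^*$ and $\wt N=N_{E_0/T_0}$ is a valid alternative to the paper's route. It is slightly more self-contained: the paper instead combines (\ref{SESsemiram}) with the isomorphism $\SK(E)\cong\widehat H^{-1}(G,E_0^*)$ from \cite{hazrat-wadsworth-2011}.
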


\begin{proof}
We first prove \textup{(i)}.
The isomorphism $E^*/E_0^* \cong \Gamma_E$  given by $\ov{\degg}$ maps 
$T^*E_0^*$ onto $\Gamma_T$; hence $T^*E_0^*/E_0^* \cong \Gamma_T$.    Assuming $E$ is unramified, we have 
$\Gamma_E = \Gamma_T$, hence $E^* = T^* E_0^*$.  As $T^*$ is central in
$E^*$, it follows that $E' = E_0'$.  Consider the natural exact sequence:
$$
        1\longrightarrow E_0^*/E_0'\longrightarrow T^*E_0^*/E_0'\longrightarrow T^*E_0^*/E_0^* \longrightarrow 1
$$
Here, the middle term is isomorphic to $E^*/E'$ and the right term is isomorphic to $\Gamma_T$, which is 
torsion-free.  Hence, we have isomorphism of the left and middle torsion subgroups, i.e., 
$$
\TK(E_0) =  \tau(E_0^*/E_0') \cong  \tau(T^*E_0^*/E_0') \cong \tau(E^*/E') =  \TK(E).
$$

We next prove \textup{(ii)} using the compressed exact sequence \eqref{eq:compressed-tk1-sequence}. If $E$ is totally 
ramified, then $E_0=T_0$, so $\widetilde N=\operatorname{id}_{T_0^*}$ and $\ker\widetilde N=1$. Hence the 
compressed exact sequence gives
$$
        \mathrm{TK}_1(E)\cong \mu_{T_0}\big / \widetilde N(E')=\mu_{T_0}/E'.
$$
By the description of the commutator subgroup of a totally ramified graded division algebra, we have $E'=\mu_{T_0}(e)$, 
where $e=\exp(\Gamma_E/\Gamma_T)$; see \cite[Prop.~2.1]{hwang-wadsworth-99} and 
compare \cite[Cor.~3.6(ii)]{hazrat-wadsworth-2011}. Consequently $\mathrm{TK}_1(E)\cong\mu_{T_0}/\mu_{T_0}(e)$.

We now prove \textup{(iii)}. Suppose that $E$ is semiramified. That is, 
$[E_0:T_0] = |\Gamma_E:\Gamma_T| = \opn{ind}(E)$, and $E_0$ is a field.  
So, $Z_0 = Z(E_0) = E_0$, hence $E_0$ is Galois over~$T_0$, and $G=\opn{Gal}(E_0/T_0)$, and the epimorphism
$\Gamma_E/\Gamma_T \to G$ must be an isomorphism by comparing cardinalities.  Note also that 
$$
d_E \, = \, \opn{ind}(E) \big/(\opn{ind}(E_0) [Z_0:T_0])  \,=\, |\Gamma_E:\Gamma_T|\big/[E_0:T_0]  \,=\, 1.
$$
We saw in the proof of exactness of the upper row of Diagram 1  that $I_{G}(A) = \Nrd_{E_0}(\mathcal I_E)$.  Here, 
$A = E_0^*$ and  $\Nrd_{E_0}=\opn{id}_{E_0}$; hence $\mathcal I_E=I_{G}(E_0^*)$.
 Since $E_0$ is a field, $\mathrm{SK}_1(E_0)=1$; so the upper row of Diagram~1 identifies 
 $E^{(\mu)}/I_{G}(E_0^*)\cong \mathcal H_\mu(G,E_0^*)$.
Under this identification, the lower row of Diagram~1 becomes the exact sequence
\begin{equation}
\label{SESforH_mu}
        G\wedge G
        \longrightarrow
        \mathcal H_\mu(G,E_0^*)
        \longrightarrow
        \mathrm{TK}_1(E)
        \longrightarrow 1.
\end{equation}
Here the first map sends 
$(\gamma+\Gamma_T)\wedge(\delta+\Gamma_T) \longmapsto [x_\gamma,x_\delta]\,I_{G}(E_0^*)$ for any nonzero 
$x_\gamma\in E_\gamma$ and $x_\delta \in E_\delta$.

To complete part \textup{(iii)}, observe that exactness of (\ref{SESsemiram}) is a special case of 
Prop.~\ref{prop:TKvsSKforE}. 
For, as $d_E = 1$,
we have 
$\Nrd_E(E_0^*) = \wt N(E_0^*) = N_{E_0/T_0} ( \Nrd_{E_0}(E_0^*)) = N_{E_0/T_0}(E_0^*)$.

We now turn to  \textup{(iv)}. Suppose that $E$ has maximal graded subfields $L$ and $K$, respectively unramified and 
totally ramified over $T$. Then, invoking the fundamental equality $ {[E_0:T_0]\,|\Gamma_E:\Gamma_T|  = [E:T]} $, we have
\begin{align*}
[L:T]\,&[K:T] \, = \, [L_0:T_0]\,|\Gamma_K:\Gamma_T| \, \le [E_0:T_0]\,|\Gamma_E:\Gamma_T|  \\ 
\, &= \,[E:T] \,
= \, \opn{ind}(E)^2\, = \, [L:T]\,[K:T].
\end{align*}
Hence, equality must hold throughout, showing that $E_0 = L_0$, a field with $[E_0:T_0] = [L_0:T_0] = [L:T]= \opn{ind}(E)$
and $\Gamma_E = \Gamma_K$, so that $|\Gamma_E:\Gamma_T| = |\Gamma_K:\Gamma_T| = [K:T] = \opn{ind}(E)$.
Thus, $E$ is semiramified, so part (iii) applies here.

We claim that the map $G\wedge G\to\mathcal H_\mu(G,E_0^*)$ is trivial. 
Since $\Gamma_E = \Gamma_K$, the arbitrary nonzero elements $x_\gamma \in E_\gamma$ and $x_\delta \in E_\delta$
may each be chosen to be in $K^*$; 
As $K$ is commutative  the map $\alpha$ in 
diagram 1 is trivial;  hence the map into  $H_\mu(G,E_0^*)$ is also trivial, as claimed.
Therefore the exact sequence (\ref{SESforH_mu}) in \textup{(iii)} yields $\mathrm{TK}_1(E)\cong \mathcal H_\mu(G,E_0^*)$.
Correspondingly, the analogous argument for $\SK(E)$ given in \cite[Cor.~3.6(iv)]{hazrat-wadsworth-2011} gives 
$\SK(E) \cong  \widehat H^{-1}(G,E_0^*)$.  Therefore, exact sequence (\ref{SESH^-1forTK_1}) follows from 
exact sequence (\ref{SESsemiram}), completing the proof of  \textup{(iv)}.

It remains to prove \textup{(v)}.  Suppose that $\Gamma_T \cong\zz$,  Then $d_E = 1$ by 
\cite[Prop.~8.49]{tignol-wadsworth-2015}.  Moreover, $\Gamma_E/\Gamma_T \wedge \Gamma_E/\Gamma_T$
is trivial, as $\Gamma_E/\Gamma_T$ is a cyclic group.  Hence, the exact lower row of Diagram~1 yields
$\TK(E) \cong E^{(\mu)}/\mathcal I_E$. With this, exact sequence (\ref{ES:TK_1(E)Gamma_cyclic}) follows 
from the exact top row of Diagram 1,  and short exact sequence (\ref{SES:TK_1(E)Gamma_cyclic}) follows from the 
short exact column of the diagram.   
\end{proof}

\section{The obstruction group for the congruence theorem for $\operatorname{TK}_1(D)$}\label{sec:obstruction-group}

We next turn to the valued setting.  Let $D$ be a division ring finite-dimensional over its center $K$,  such that 
$D$ has a valuation $v$ whose restriction to $K$ is Henselian, and $D$ is tame over $K$ with respect to $v$.
We use the notation and terminology introduced in Section 2. 

The Congruence Theorem for $\SK(D)$ says that 
\begin{equation}
\label{SKcongth}
(1+M_D) \cap D^{(1)} \, \subseteq D'.
\end{equation}
This was proved by Platonov in \cite{platonov-1976} for $v|_K$ complete discrete, and in full generality for $v|_K$
Henselian (and $D$ tame) in \cite{hazrat-wadsworth-2011}.

In \cite{khanh-khoa-2025}, Khanh and Khoa identified the obstruction to a congruence theorem for 
$\operatorname{TK}_1(D)$
as the group 
$$
 \bH\,=((1+M_D)\cap D^{(\mu)})\big/((1+M_D)\cap D').
$$
The congruence theorem for $\operatorname{TK}_1(D)$ holds precisely when $\mathbf{H}=1$. 
It was shown in \cite{khanh-khoa-2025} that $\bH$ is trivial if $\charr(\ov K) = 0$ or if 
$\charr(\ov K) = p>0$ and $K$ contains no primitive $p$-th root of unity.
We now fully determine $\bH$.

In light of the Congruence Theorem for $\SK(D)$, recalled in (\ref{SKcongth}),
we can restate $\bH$ as 
\begin{equation}
\label{Hrestated}
        \mathbf{H}\,=\,\big((1+M_D)\cap \Dmu\big)\big/\big((1+M_D)\cap D^{(1)}\big).
\end{equation}

\begin{proposition}
\label{prop:obstruction-norm}  Let $N(\bH) = \Nrd_D\big((1+M_D)\cap \Dmu\big)$. Then,
$$
\bH\, \cong N(\bH) \, = \, (1+M_K)\cap\mu_K.
$$
\end{proposition}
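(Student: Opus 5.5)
The plan is to apply $\Nrd_D$ to the restated form (\ref{Hrestated}) of $\bH$ and then identify the image.

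\emph{The isomorphism $\bH\cong N(\bH)$.} The restriction of $\Nrd_D$ to $(1+M_D)\cap\Dmu$ is a homomorphism into $K^*$. Its kernel is exactly $(1+M_D)\cap D^{(1)}$. By (\ref{Hrestated}), it therefore induces an isomorphism of $\bH$ onto its image $N(\bH)$. This part is formal.

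\emph{The inclusion $N(\bH)\subseteq(1+M_K)\cap\mu_K$.} Take $d\in(1+M_D)\cap\Dmu$. Then $\Nrd_D(d)\in\mu_K$ by the description $\Dmu=\{d\in D^*\mid \Nrd_D(d)\in\mu_K\}$ from Section~2. Also $\Nrd_D(d)\in 1+M_K$ by (\ref{eq:Nrd(1+M_D)}).

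\emph{The reverse inclusion.} Let $\zeta\in(1+M_K)\cap\mu_K$. By (\ref{eq:Nrd(1+M_D)}) there is some $d\in 1+M_D$ with $\Nrd_D(d)=\zeta$. Since $\zeta\in\mu_K$, we get $d\in\Dmu$. Here one must check that $\Dmu$ really equals the set of elements with torsion reduced norm, not just that it is contained in it. Concretely, if $\zeta^k=1$ then $d^k\in D^{(1)}$. Since $\SK(D)$ is torsion (it is killed by $\ind(D)$), some power of $d$ lies in $D'$. Hence $dD'$ is torsion in $D^*/D'$, so $d\in\Dmu$, and $\zeta=\Nrd_D(d)\in N(\bH)$.

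\emph{Where the real content lies.} The only nontrivial input is the surjectivity $\Nrd_D(1+M_D)=1+M_K$, which is quoted as (\ref{eq:Nrd(1+M_D)}) from \cite[Cor.~4.7]{hazrat-wadsworth-2011}. This is where tameness enters. Given that result, I do not expect a serious obstacle. The one point to handle with care is the fact that $\SK(D)$ is torsion, used to show that $\Dmu$ is exactly the set of elements with root-of-unity reduced norm. This can be cited, or it follows from $d^{\ind(D)}\in \Nrd_D(d)\,D'$ for $d\in D^*$, by Wang's argument.
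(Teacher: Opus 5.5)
Your proposal is correct and follows the paper's proof essentially step by step. You use $\Nrd_D$ with the kernel computation and (\ref{Hrestated}) to get $\bH\cong N(\bH)$. The two inclusions then come from (\ref{eq:Nrd(1+M_D)}) together with the fact that $\SK(D)$ is torsion, exactly as in the paper.
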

\begin{proof}
The kernel of the reduced norm map $\Nrd_D\colon (1+M_D)\cap \Dmu\to K^*$ is 
$(1+M_D)\cap \Dmu\cap D^\upone
= (1+M_D)\cap D^\upone$.  Therefore, in view of (\ref{Hrestated}),  $\Nrd_D$ induces an isomorphism 
$\bH\, \cong N(\bH)$. 
 
To prove the equality in the proposition, recall from (\ref{eq:Nrd(1+M_D)}) above  that 
$\operatorname{Nrd}_D(1+M_D)=1+M_K$. Now, take any 
$d\in(1+M_D)\cap \Dmu$. Since $d\in1+M_D$, we have $\operatorname{Nrd}_D(d)\in1+M_K$. Since $d\in \Dmu$, 
some power of $d$ lies in $D'$, say $d^m\in D'\subseteq D^{(1)}$. Then $\operatorname{Nrd}_D(d)^m=1$, so $\operatorname{Nrd}_D(d)\in\mu_K$. Thus $N(\mathbf{H})\subseteq(1+M_K)\cap\mu_K$.

Conversely, take $c\in(1+M_K)\cap\mu_K$, say $c^\ell=1$. Choose $b\in1+M_D$ with $\operatorname{Nrd}_D(b)=c$. 
Then $\operatorname{Nrd}_D(b^\ell)=1$, so $b^\ell\in D^{(1)}$. Since $D^{(1)}/D'$ is torsion, $b^{\ell k}\in D'$ for 
some $k\geq1$, and hence $b\in \Dmu$. Therefore $c=\operatorname{Nrd}_D(b)\in N(\mathbf{H})$, proving the 
reverse inclusion.
\end{proof}

We now characterize $(1+M_K)\cap\mu_K$ as $\mu_K[p]$ or trivial.  Recall that for  $p$  any prime number,
$\mu_K[p]$ denotes the $p$-primary component of $\mu_K$.  Moreover, if  $\mu_K[p]$ is nontrivial,
then necessarily ${\charr(K) \ne p}$. If this holds, then  $\mu_K[p]$ is either a finite cyclic $p$-group or an 
infinite group which is a nested union of finite cyclic $p$-groups; all these possibilities occur for 
suitable choices of $K$.

\begin{lemma}
\label{lem:roots-of-unity-s}
\textit{The following assertions hold:}
\begin{enumerate}[font=\normalfont]
\item[(i)] \textit{If $\operatorname{char}(\overline K)=p>0$, then $(1+M_K)\cap\mu_K=\mu_K[p]$.}
\item[(ii)] \textit{If $\operatorname{char}(\overline K)=0$, then $(1+M_K)\cap\mu_K=\{1\}$.}
\end{enumerate}
\end{lemma}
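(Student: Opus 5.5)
The plan is to reduce to the residue map on roots of unity. Every $\zeta\in\mu_K$ satisfies $v(\zeta)=0$, so $\zeta\in V_K^*$, and $(1+M_K)\cap\mu_K$ is exactly the kernel of the reduction homomorphism $\rho\colon \mu_K\to \ov K^*$, $\zeta\mapsto\ov\zeta$. So in both parts we must determine which roots of unity reduce to $1$.

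\emph{Roots of order prime to the residue characteristic reduce injectively.} Let $n$ be a positive integer with $n$ invertible in $\ov K$, that is, $p\nmid n$ in case (i), or $n$ arbitrary in case (ii). Suppose $\zeta\in\mu_K(n)$ with $\ov\zeta=1$. Write $\zeta=1+x$ with $x\in M_K$. I will use
$$0=\zeta^n-1=x\big(n+\tbinom n2x+\dots+x^{n-1}\big).$$
The second factor is $\equiv n \pmod{M_K}$, and $n$ is a unit of $V_K$, so that factor is nonzero. Hence $x=0$ and $\zeta=1$. (Equivalently, $X^n-1$ is separable over $\ov K$, so distinct $n$-th roots of unity have distinct residues.) Since $\mu_K$ is a torsion abelian group, it is the direct sum of its primary components. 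In case (ii) every element has order invertible in $\ov K$, so the kernel is trivial, which proves (ii). In case (i) the kernel meets each $\mu_K[\ell]$ with $\ell\ne p$ trivially. A torsion element in the kernel decomposes into primary parts, and I must check that each prime-to-$p$ part also lies in the kernel. This holds because the prime-to-$p$ part is a power of the element itself, so it too reduces to $1$. Hence $(1+M_K)\cap\mu_K\subseteq\mu_K[p]$.

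\emph{$p$-power roots of unity reduce to $1$ when $\charr(\ov K)=p$.} For $\zeta\in\mu_K[p]$ with $\zeta^{p^r}=1$, we have $(\ov\zeta)^{p^r}=1$ in $\ov K$. Since the Frobenius map is injective in characteristic $p$, this gives $(\ov\zeta-1)^{p^r}=\ov\zeta^{\,p^r}-1=0$, so $\ov\zeta=1$, i.e.\ $\zeta\in1+M_K$. Combined with the previous step this yields (i).

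None of these steps is a serious obstacle. The only point needing care is the decomposition argument in (i): the kernel restricted to the prime-to-$p$ part is trivial, which rests on the binomial/unit computation above. Henselianity is not even needed for this lemma.
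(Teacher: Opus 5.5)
Your proof is correct and is essentially the paper's argument. The paper also gets $\mu_K[p]\subseteq 1+M_K$ from $\mu_{\ov K}[p]=1$. It rules out prime-to-$p$ elements of $(1+M_K)\cap\mu_K$ by passing to a power $b$ of prime order $q\ne p$ and noting that $\ov b=1$ cannot be a root of $(X^q-1)/(X-1)$, whose value at $1$ is $q\neq 0$ in $\ov K$. That is exactly your observation that the cofactor $n+\tbinom n2 x+\dots+x^{n-1}$ is congruent to the unit $n$. The remaining differences are cosmetic: you argue directly via the prime-to-$p$ part instead of by contradiction with an element of prime order. You are also right that Henselianity plays no role in this lemma.
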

\begin{proof}
Suppose first that $\operatorname{char}(\overline K)=p>0$. Take any $c\in\mu_K[p]$. Then the image $\overline c$ of 
$c$ in $\overline K$ lies in $\mu_{\overline K}[p]=\{1\}$; hence $c\in1+M_K$. Thus 
$\mu_K[p]\subseteq(1+M_K)\cap\mu_K$. 
Suppose this inclusion were strict. Then there is an $a\in(1+M_K)\cap\mu_K$ whose order $r$ in $\mu_K$ is not a 
power of $p$. Hence there is a prime $q\ne p$ dividing $r$. Let $b=a^{r/q}$, so $b\in1+M_K$, $b^q=1$, and $b\ne1$. 
Thus $b$ is a root of $(X^q-1)/(X-1)\in V_K[X]$. Hence $\overline b=1$ in $\overline K$, but also $\overline b$ is a root 
of $(X^q-1)/(X-1)$ in $\overline K[X]$. Since $q\ne \operatorname{char}(\overline K)$, the root $1$ is a simple root 
of $X^q-1$ and hence cannot be a root of $(X^q-1)/(X-1)$. This contradiction proves (1).

If $\operatorname{char}(\overline K)=0$, the same argument applies: If $(1+M_K)\cap\mu_K$ is nontrivial, it contains 
an element $b$ of prime order $q$. Then $\overline b=1$, while $\overline b$ is a primitive $q$-th root of unity in 
$\overline K$, contradiction. This proves (2).
\end{proof}

\begin{corollary}
\label{cor:obstruction}{\phantom{.}}
\begin{enumerate}
 [font=\normalfont]
\item[(i)] If $\operatorname{char}(K)=\operatorname{char}(\overline K)$, then $\mathbf{H}=1$.
\item[(ii)] If $\operatorname{char}(K)=0$ and $\operatorname{char}(\overline K)=p>0$, then $\mathbf{H}=\mu_K[p]$.
\item[(iii)] In all characteristics for $K$ and $\ov K$,   we have $\mu_K \cong \bH \times \mu_{\ov K}$.  
\end{enumerate}
\end{corollary}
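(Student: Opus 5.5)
Parts (i) and (ii) follow by combining Prop.~\ref{prop:obstruction-norm}, which gives $\bH\cong(1+M_K)\cap\mu_K$, with Lemma~\ref{lem:roots-of-unity-s}. For (i): if $\charr(K)=\charr(\ov K)=0$, Lemma~\ref{lem:roots-of-unity-s}(ii) gives $\bH=1$. If $\charr(K)=\charr(\ov K)=p>0$, Lemma~\ref{lem:roots-of-unity-s}(i) gives $\bH\cong\mu_K[p]$. But a field of characteristic $p$ has no nontrivial $p$-power roots of unity, since $X^{p^k}-1=(X-1)^{p^k}$; hence $\mu_K[p]=1$. For (ii): when $\charr(\ov K)=p$, Lemma~\ref{lem:roots-of-unity-s}(i) directly gives $\bH\cong(1+M_K)\cap\mu_K=\mu_K[p]$. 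Here we identify $\bH$ with its isomorphic image $N(\bH)\subseteq K^*$, as in Prop.~\ref{prop:obstruction-norm}.

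For (iii), I will use the residue map. Every root of unity lies in $V_K^*$, so reduction gives a homomorphism $\rho\colon\mu_K\to\mu_{\ov K}$. Its kernel is $(1+M_K)\cap\mu_K\cong\bH$. The main step is to show that $\rho$ is surjective. Take $\bar\zeta\in\mu_{\ov K}$ of order $n$. Then $\charr(\ov K)\nmid n$, since a field of characteristic $p$ has no elements of order divisible by $p$. So $X^n-1$ is separable over $\ov K$ and $\bar\zeta$ is a simple root of it. Because $v|_K$ is Henselian, $\bar\zeta$ lifts to a root $\zeta\in V_K$ of $X^n-1$ with $\rho(\zeta)=\bar\zeta$. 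This gives the exact sequence $1\to\bH\to\mu_K\to\mu_{\ov K}\to1$.

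It remains to split this sequence. I will do it prime by prime, using the decomposition of the torsion abelian group $\mu_K$ into its primary components. If $\charr(\ov K)=p>0$, the kernel $\bH$ is $\mu_K[p]$ by Lemma~\ref{lem:roots-of-unity-s}(i). Meanwhile $\mu_{\ov K}$ has trivial $p$-part, so $\rho$ maps $\bigoplus_{q\neq p}\mu_K[q]$ isomorphically onto $\mu_{\ov K}$, and $\mu_K=\mu_K[p]\times\bigoplus_{q\ne p}\mu_K[q]\cong\bH\times\mu_{\ov K}$. If $\charr(\ov K)=0$, then $\bH=1$ by Lemma~\ref{lem:roots-of-unity-s}(ii), so $\rho$ is itself an isomorphism. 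This covers every combination of characteristics, since case (i) with $\charr(K)=\charr(\ov K)=p$ is included in the first of these two situations, with $\bH=1$.

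The only delicate point is the Hensel lifting that gives surjectivity. It needs $n$ prime to the residue characteristic, and that is automatic for elements of $\mu_{\ov K}$.
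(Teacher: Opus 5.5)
Your proof is correct and takes essentially the paper's route: (i) and (ii) come from Prop.~\ref{prop:obstruction-norm} and Lemma~\ref{lem:roots-of-unity-s}, and you rightly make explicit that $\mu_K[p]=1$ when $\charr(K)=p$. For (iii) you, like the paper, split off $\bH$ as the $p$-primary part and identify the prime-to-$p$ part of $\mu_K$ with $\mu_{\ov K}$ by Henselian lifting; the only differences are that you prove this identification explicitly via the residue map, and that you organize the cases by $\charr(\ov K)$ rather than by comparing $\charr(K)$ with $\charr(\ov K)$.
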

\begin{proof}
Parts (i) and (ii)  follow immediately from  Prop.~\ref{prop:obstruction-norm} and Lemma~\ref{lem:roots-of-unity-s}.
For (iii) note that if $\charr(K) = \charr(\ov K)$, then $\bf H = 1$ while,  as the valuation on $K$ is Henselian, 
$\mu_K \cong\mu_{\ov K}$; so (iii) holds in this case.  If $\charr(K) = 0$ while $\charr(\ov K) = p >0$, then 
$\mu_{\ov K}[p] = 1$, while, as $v$ on $K$ is Henselian, the prime-to-$p$ parts of $\mu_K$ and $\mu_{\ov K}$
are isomorphic.  Hence, 
$
\mu_K \cong \mu_K [p] \times \mu_{\ov K} \cong \bH \times \mu_{\ov K}.
$
Thus, (iii)~holds in all cases. 
\end{proof}

Case (i) of this corollary was proved by Motiee in \cite[Th.~9]{motiee-2013} under the added 
hypothesis that $\charr(\ov K)\nmid\opn{ind}(D)$.

\begin{remark}
The preceding corollary sharpens the obstruction result of \cite{khanh-khoa-2025}. It shows that $\mathbf{H}$~depends 
only 
on the valued center $K$, and not on the particular tame division algebra $D$. In particular, if $K$ is the $2$-adic 
field $\mathbb Q_2$, then  $\mathbf{H}\cong \mu_{\mathbb Q_2}[2]=\{\pm1\}$ regardless of the parity of 
$[D:\mathbb Q_2]$.
This corrects an error in 
\cite[Prop.~3(2)]{khanh-khoa-2025}.  But note that if $[D:\mathbb Q_2]$ is even, then 
$\bH \ne \{\pm 1\}((1+M_D)\cap D')$, even though
$\bH \cong \{\pm 1\}$.  See Example~\ref{ex:p-adic}(ii)  below.
\end{remark}

\section{A valued exact sequence for $\operatorname{TK}_1(D)$}\label{sec:valued-exact-sequence}

The graded results of Section~\ref{sec:graded-tk1} can be applied to $E=\gr(D)$.  The remaining point in passing from 
a tame valued division algebra $D$ to $\gr(D)$ is the obstruction $\bH$ computed in Section~\ref{sec:obstruction-group}.  
The next result identifies this obstruction as the kernel of the natural map $\TK(D)\to\TK(\gr(D))$.  Throughout this 
section, 
$D$ is a division algebra finite-dimensional over its center $K$, with a valuation~$v$ on $D$ such that $v|_K$ is 
Henselian and $D$ is tame with respect to $v$.

\begin{theorem}
\label{thm:valued-tk1-exact-sequence}
There is a short exact sequence
\begin{equation}
\label{SESforH}
        1\,\longrightarrow \,\mathbf{H}\,\longrightarrow \,\operatorname{TK}_1(D)\,\longrightarrow\, 
        \operatorname{TK}_1(\gr(D))\,\longrightarrow \,1.
\end{equation}

\end{theorem}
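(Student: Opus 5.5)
We will use the natural epimorphism $\rho\colon D^*\to\gr(D)^*$, $a\mapsto\wt a$, which has kernel $1+M_D$. Since $\rho$ is onto, it maps $D'$ onto $\gr(D)'$, so it induces a homomorphism $K_1(D)\to K_1(\gr(D))$ and hence $\TK(D)\to\TK(\gr(D))$. The plan is to show three things in terms of the preimage groups: $\rho(\Dmu)=\gr(D)^{(\mu)}$; $\rho^{-1}(\gr(D)')=(1+M_D)D'$; and the resulting kernel equals $\bH$.

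\emph{Step 1: $\rho(\Dmu)\subseteq\gr(D)^{(\mu)}$.} If $a\in\Dmu$, then $a^m\in D'$ for some $m\ge1$. Hence $\wt a^{\,m}=\rho(a^m)\in\gr(D)'$, so $\wt a\in\gr(D)^{(\mu)}$. Thus the induced map $\TK(D)\to\TK(\gr(D))$, $aD'\mapsto\wt a\,\gr(D)'$, is well defined.

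\emph{Step 2: surjectivity.} This is the step I expect to be the main obstacle. Take $\wt a\in\gr(D)^{(\mu)}$ with $a\in D^*$. By \eqref{eq:Nrdcompatibility}, $\wt{\Nrd_D(a)}=\Nrd_{\gr(D)}(\wt a)\in\mu_{\gr(K)_0}=\mu_{\ov K}$. So $\Nrd_D(a)$ has value $0$, and its residue $\ov c$ is a root of unity of some order $n$.

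First assume $p=\charr(\ov K)$ does not divide $n$. Hensel's lemma then gives $\zeta\in\mu_K$ with $\ov\zeta=\ov c$, so $\Nrd_D(a)=\zeta u$ with $u\in1+M_K$. By \eqref{eq:Nrd(1+M_D)}, choose $b\in1+M_D$ with $\Nrd_D(b)=u$. Then $\Nrd_D(ab^{-1})=\zeta\in\mu_K$, so $ab^{-1}\in\Dmu$ and $\rho(ab^{-1})=\wt a$.

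In general $\ov c$ has order prime to $p$, since $\mu_{\ov K}$ has no $p$-torsion when $\charr(\ov K)=p$. So the case above already covers everything. As a cleaner alternative, I would simply use that $\mu_{\ov K}$ lifts to $\mu_K$ by Cor.~\ref{cor:obstruction}(iii), which is the Henselian lifting of roots of unity of order prime to $p$.

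\emph{Step 3: kernel.} The kernel of $\TK(D)\to\TK(\gr(D))$ is $(\Dmu\cap\rho^{-1}(\gr(D)'))/D'$, so we need $\rho^{-1}(\gr(D)')=(1+M_D)D'$. The inclusion $\supseteq$ is clear. For $\subseteq$, every element of $\gr(D)'$ is a product of commutators $[\wt x,\wt y]=\rho([x,y])$, so it lies in $\rho(D')$; hence $\rho^{-1}(\gr(D)')=(1+M_D)D'$. The kernel is therefore $(\Dmu\cap(1+M_D)D')/D'$. Since $D'\subseteq\Dmu$, this equals $((\Dmu\cap(1+M_D))D')/D'$. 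By the second isomorphism theorem, this is $\cong(\Dmu\cap(1+M_D))/((1+M_D)\cap D')=\bH$.

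Putting the three steps together yields the exact sequence \eqref{SESforH}.
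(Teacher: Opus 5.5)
Your proof is correct and follows the paper's argument. Both proofs establish $\rho(D')=\gr(D)'$ and $\rho(\Dmu)=\gr(D)^{(\mu)}$ and then identify the kernel with $\bH$; the paper does this with the two-row diagram and cokernels, and you do it with the modular law and the second isomorphism theorem. The only real difference is in surjectivity: you lift the residue root of unity $\Nrd_{\gr(D)}(\wt a)$ to $\mu_K$ by Hensel's lemma, whereas the paper passes to the prime-to-$p$ part $m$ of the exponent and extracts an $m$-th root of $\Nrd_D(a^m)$ in $1+M_K$; both rest on the same Henselian input together with $\Nrd_D(1+M_D)=1+M_K$.
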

\begin{proof}
We will use  the property  given in (\ref{eq:Nrdcompatibility}) above:  $\operatorname{Nrd}_{\gr(D)}(\widetilde a)=\widetilde{\operatorname{Nrd}_D(a)}$ for all $a\in D^*$.

 Consider the diagram
$$
\begin{tikzcd}[column sep=large,row sep=large]
1 \arrow[r]
& (1+M_D)\cap D' \arrow[r] \arrow[d,hook]
& D' \arrow[r,"\rho"] \arrow[d,hook]
& (\gr(D))' \arrow[r] \arrow[d,hook]
& 1 \\
1 \arrow[r]
& (1+M_D)\cap \Dmu \arrow[r]
& \Dmu  \arrow[r,"\rho"]
& \gr(D)^{(\mu)} \arrow[r]
& 1 .
\end{tikzcd}
$$

The columns are all inclusion maps.
The top row is exact because the canonical map ${\rho\colon D^*\to(\gr(D))^*}$ is onto and $\ker\rho=1+M_D$. 

We next show that 
$\rho(\Dmu)=\gr(D)^{(\mu)}$, which yields exactness of the bottom row.
If ${c\in \Dmu}$, then $c^n\in D'$ for some $n\ge 1$, so $\widetilde c^{\,n}\in\rho(D')=(\gr(D))'$, 
and hence $\widetilde c\in \gr(D)^\upmu$. Thus 
${\rho(\Dmu)\subseteq \gr(D)^\upmu}$. Conversely, take $a\in D^*$ with 
$\widetilde a\in \gr(D)^\upmu$. Then $\widetilde a^{\,k}\in(\gr(D))'$ 
for some $k\geq1$, so $(\Nrd_{\gr(D)}(\widetilde a))^k = \Nrd_{\gr(D)}(\widetilde a^k)=1$. 
If $\operatorname{char}(\overline K)=p>0$, 
let $m$ be the prime-to-$p$ part of~$k$; since $\mu_{\gr(K)}$ has no $p$-torsion, $(\Nrd_{\gr(D)}(\widetilde a))^m=1$. 
Hence $\widetilde{\Nrd_D(a^m)}=1$, so $\operatorname{Nrd}_D(a^m)\in1+M_K$. Since $p\nmid m$ and 
$K$ is Henselian, the $m$-th power map on $1+M_K$ is onto. Choose $b\in1+M_K$ with $b^m=\Nrd_D(a^m)$, and 
choose $c\in1+M_D$ with $\Nrd_D(c)=b$. Then $\Nrd_D((ac^{-1})^m) =\Nrd_D(a^m)\Nrd_D(c)^{-m}=1$, 
so $(ac^{-1})^m\in D^{(1)}$. Since $D^{(1)}/D'$ is torsion, $ac^{-1}\in \Dmu$. As $\rho(c)=1$, we have 
$\widetilde a=\rho(ac^{-1})\in\rho(\Dmu)$.

If instead $\operatorname{char}(\overline K)=0$, the same argument works with $m=k$. Hence, 
$\rho(\Dmu)= \gr(D)^\upmu$
in all cases.
By taking cokernels of the vertical maps in the diagram and invoking the 5-Lemma we obtain the desired short exact 
sequence. \end{proof}

By applying this theorem and preceding results, we obtain a description of  
$\TK(D)/\SK(D)$:

\begin{proposition} 
\label{prop:TKvsSKforD}
$$
\TK(D)/\SK(D)\  \cong \ \bH\times 
 \big[\mu_{\ov K} \cap (N_{Z(\ov D)/\ov K}(\Nrd_{\ov D}(\ov D^*)))^{d_D} \big],
$$ 
where $d_D = \opn{ind}(D)\big/\big(\opn{ind}(\ov D)[Z(\ov D): \ov K]\big) \in \nn$.
\end{proposition}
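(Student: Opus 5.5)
Since $\TK(D)/\SK(D)$ and the other groups involved are abelian, I will compute the quotient through reduced norms. The plan has three steps: identify $\TK(D)/\SK(D)$ with $\Nrd_D(\Dmu)$, split that group of roots of unity into its $(1+M_K)$-part and a residual part, and identify the residual part with the $\mu_{\ov K}$-term via $E=\gr(D)$.

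\emph{Step 1.} Exactly as in the proof of Prop.~\ref{prop:TKvsSKforE}, $\Nrd_D$ restricted to $\Dmu$ has kernel $D^{(1)}$. Hence
$$\TK(D)/\SK(D)\cong \Dmu/D^{(1)}\cong \Nrd_D(\Dmu)\subseteq \mu_K.$$

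\emph{Step 2.} Split $\mu_K$ using Cor.~\ref{cor:obstruction}(iii) and Lemma~\ref{lem:roots-of-unity-s}. We have $\mu_K=\mu_K[p]\times\mu_K^{(p')}$, where $\mu_K[p]=(1+M_K)\cap\mu_K$; this factor is trivial if $\charr(\ov K)=0$. The prime-to-$p$ part $\mu_K^{(p')}$ maps isomorphically onto $\mu_{\ov K}$ under residue, by Henselianity.

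By Prop.~\ref{prop:obstruction-norm}, $\Nrd_D(\Dmu)$ contains $N(\bH)=(1+M_K)\cap\mu_K\cong\bH$. Since $\Nrd_D(\Dmu)$ is a subgroup of the locally cyclic group $\mu_K$ containing the full $p$-primary factor $N(\bH)$, it decomposes as
$$\Nrd_D(\Dmu)=N(\bH)\times\big(\Nrd_D(\Dmu)\cap\mu_K^{(p')}\big).$$
Reduction mod $M_K$ is injective on $\mu_K^{(p')}$. So it remains to show that the residue image of $\Nrd_D(\Dmu)$ in $\mu_{\ov K}$ equals $\mu_{\ov K}\cap\big(N_{Z(\ov D)/\ov K}(\Nrd_{\ov D}(\ov D^*))\big)^{d_D}$. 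This residue image equals the image of the prime-to-$p$ part, because $\mu_K[p]$ reduces to $1$.

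\emph{Step 3.} Compute the residue image in the graded setting with $E=\gr(D)$, where $E_0=\ov D$, $T_0=\ov K$, $Z_0=Z(\ov D)$ and $d_E=d_D$.

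By (\ref{eq:Nrdcompatibility}), for $a\in\Dmu$ the residue of $\Nrd_D(a)$ is $\Nrd_{E}(\wt a)$. By Theorem~\ref{thm:valued-tk1-exact-sequence}, more precisely $\rho(\Dmu)=E^{(\mu)}$ from its proof, the residue image equals $\Nrd_E(E^{(\mu)})$.

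The proof of Prop.~\ref{prop:TKvsSKforE} shows $\Nrd_E(E^{(\mu)})=\Nrd_E(E_0^*)\cap\mu_{T_0}$. By (\ref{E_0Nrdformula}), $\Nrd_E(E_0^*)=\wt N(E_0^*)^{d_E}$, which is exactly $\big(N_{Z(\ov D)/\ov K}(\Nrd_{\ov D}(\ov D^*))\big)^{d_D}$. Combining this with Step~2 gives the stated isomorphism.

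The main obstacle is the splitting in Step~2. One must verify that $N(\bH)$ is a direct factor of $\Nrd_D(\Dmu)$ with complement mapping isomorphically onto the residue image, and that $\mu_{T_0}$ sitting in $\gr(K)_0$ matches $\mu_{\ov K}$. The point I will state carefully is that any subgroup $S$ of $\mu_K$ containing $\mu_K[p]$ equals $\mu_K[p]\times(S\cap\mu_K^{(p')})$: each element of $S$ has its $p$-primary component in $\mu_K[p]\subseteq S$, hence its prime-to-$p$ component also lies in $S$. Together with injectivity of residue on $\mu_K^{(p')}$, this gives the required decomposition.
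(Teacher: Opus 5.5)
Your proof is correct, but it takes a somewhat different route from the paper's.

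The paper proceeds in four steps:
\begin{itemize}
\item It starts from the sequence $1\to\bH\to\TK(D)\to\TK(\gr(D))\to 1$ of Theorem~\ref{thm:valued-tk1-exact-sequence}.
\item It divides out by the isomorphism $\SK(D)\cong\SK(\gr(D))$ of \cite[Th.~4.8]{hazrat-wadsworth-2011}, obtaining $1\to\bH\to\TK(D)/\SK(D)\to\TK(E)/\SK(E)\to 1$.
\item It evaluates the right-hand term by Prop.~\ref{prop:TKvsSKforE} and (\ref{E_0Nrdformula}).
\item It splits the sequence abstractly: $\bH\cong\mu_K[p]$ is $p$-primary, while the right-hand term lies in $\mu_{\ov K}$ and so has no $p$-torsion.
\end{itemize}

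You instead identify $\TK(D)/\SK(D)$ directly with the norm group $\Nrd_D(\Dmu)=\Nrd_D(D^*)\cap\mu_K$ and split it internally inside $\mu_K$. The first factor is literally $N(\bH)=(1+M_K)\cap\mu_K$. The prime-to-$p$ complement is carried injectively by the residue map onto $\Nrd_E(E^{(\mu)})$. For this you use (\ref{eq:Nrdcompatibility}) and the equality $\rho(\Dmu)=\gr(D)^{(\mu)}$ established in the proof of Theorem~\ref{thm:valued-tk1-exact-sequence}.

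What your route buys is an explicit isomorphism: reduced norm, followed by residue on the prime-to-$p$ part. It also never invokes the $\SK$ isomorphism of Hazrat--Wadsworth; the congruence theorem enters only through $\bH\cong N(\bH)$ in Prop.~\ref{prop:obstruction-norm}. The paper's route is shorter once Theorem~\ref{thm:valued-tk1-exact-sequence} is available. It presents the proposition as a formal consequence of that exact sequence together with the graded Prop.~\ref{prop:TKvsSKforE}.
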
 

\begin{proof}
Let $E =\gr(D)$.  From the exact sequence (\ref{SESforH}) and the isomorphism $\SK(D) \cong \SK(E)$
of \cite[Th.~4.8]{hazrat-wadsworth-2011}, we obtain the following exact sequence:
$$
1\, \longrightarrow \, \bH \, \longrightarrow \, \TK(D)/\SK(D)\, \longrightarrow \, 
\TK(E)/\SK(E)\, \longrightarrow \, 1
$$
Since $E_0 \cong \ov D$, $T_0 \cong \ov K$, $\Nrd_{E_0} = \Nrd_{\ov D}$, $\ind (E) = \ind(D)$, and 
$\ind(E_0) = \ind(\ov D)$,
we have $d_D = d_E$ as given in (\ref{E_0Nrdformula}).  By applying these isomorphisms and equalities along with  Prop.~\ref{prop:TKvsSKforE},
as well as the formula in (\ref{E_0Nrdformula}), 
 the right term in this exact sequence becomes 
 $\mu_{\ov K} \cap\big(N_{Z(\ov D)/\ov K}(\Nrd_{\ov D}(\ov D^*))\big)^{d_D}$.
 When $\bH$~is trivial, this yields the proposition.  When $\bH$~is nontrivial, we have from Cor.~\ref{cor:obstruction}
 that $\charr(K) = 0$, $\charr(\ov K) = p > 0$ for some prime $p$, and $\bH \cong\mu_K[p]$.  Since the right term in the 
 exact sequence is a subgroup of $\mu_{\ov K}$, it is abelian torsion, but with no $p$-torsion.  Therefore, the exact
 sequence is then split, and the proposition holds in this case also. 
\end{proof}

From the preceding theorem and proposition, we  deduce formulas for  $\TK(D)$ from 
the corresponding ones for $\TK(\gr(D))$ given in 
Theorem~\ref{theorem:graded-tk1-cases} and the information about $\SK(D)$ given 
in~\cite{hazrat-wadsworth-2011}.  We continue to assume that $D$ is finite-dimensional and  tame over its 
Henselian center~$K$.

\begin{corollary}{\phantom{.}}
\label{cor:TK_1(D)cases}
\begin{enumerate}[font=\normalfont]
\item[(i)] 
If $D$ is unramified over $K$, then 
there is a short exact sequence:
$$
1 \,\longrightarrow\, \bH\,\longrightarrow \, \TK(D)\, \longrightarrow \,  \TK(\overline D)\, \longrightarrow \,1
$$  
\item[(ii)] 
If $D$ is totally ramified over $K$, then $\TK(D)\cong\mu_K/\mu_K(e)$, where $e=\exp(\Gamma_D/\Gamma_K)$.
\item[(iii)] 
Suppose $D$ is semiramified, i.e., $[\ov D:\ov K] = |\Gamma_D:\Gamma_K| = \opn{ind}(D)$ and $\ov D$ is a field.
Then $\ov D$~is abelian Galois over $\ov K$, and $\opn{Gal}(\ov D/\ov K) \cong \Gamma_D/\Gamma_K$.  Let 
$G = \opn{Gal}(\ov D/\ov K)$.  There is an exact sequence:
$$
G \wedge G \, \longrightarrow \, \widehat H^{-1}(G, \ov D^*) \, \longrightarrow \, \TK(D)\, \longrightarrow \, 
\bH \times \big(\mu_{\ov K} \cap N_{\ov D/\ov K}(\ov D^*)\big)\, \longrightarrow \, 1 
$$ 
\item[(iv)] 
Suppose that $D$ has a maximal subfield $R$ that is unramified over $K$ and another maximal subfield $S$
that is totally ramified over $K$.  Then $D$ is semiramified over $K$.  Let $G = \opn{Gal}(\ov D / \ov K)$.  Then 
there is a short exact sequence:
$$
1\,\longrightarrow \,  \widehat H^{-1}(G, \ov D^*) \, \longrightarrow \, \TK(D)\, \longrightarrow \, 
\bH \times \big(\mu_{\ov K} \cap N_{\ov D/\ov K}(\ov D^*)\big)\, \longrightarrow \, 1 
$$
\end{enumerate}
\end{corollary}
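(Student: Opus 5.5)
Throughout, set $E=\gr(D)$, so that $E_0=\ov D$, $T=\gr(K)$, $T_0=\ov K$, $\Gamma_E=\Gamma_D$ and $\Gamma_T=\Gamma_K$. The unramified, totally ramified and semiramified properties of $D$ are inherited by $E$, because $[E:T]=[D:K]$, $E_0=\ov D$ and $\Gamma_E=\Gamma_D$. The plan is to combine Th.~\ref{thm:valued-tk1-exact-sequence} with the graded formulas of Th.~\ref{theorem:graded-tk1-cases}, and for (iii) and (iv) also with Prop.~\ref{prop:TKvsSKforD} and the known $\SK$ results of \cite{hazrat-wadsworth-2011}.

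For (i), substitute $\TK(E)\cong\TK(E_0)=\TK(\ov D)$ from Th.~\ref{theorem:graded-tk1-cases}(i) into (\ref{SESforH}). One should check that the resulting map $\TK(D)\to\TK(\ov D)$ is the natural one. This is immediate, because for $a\in \Dmu$ of value $0$ the identification sends $a$ to $\ov a$; in the unramified case every class in $\TK(D)$ has such a representative, since $D^*=K^*V_D^*$.

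For (ii), (\ref{SESforH}) gives $1\to\bH\to\TK(D)\to\mu_{\ov K}/\mu_{\ov K}(e)\to1$, where the right term comes from Th.~\ref{theorem:graded-tk1-cases}(ii). Here I would avoid assembling an extension and instead argue directly. Since $\ov D=\ov K$, we have $D^*=K^*(1+M_D)\cdot\{\text{value representatives}\}$, and the commutator subgroup is known to be $D'=\mu_K(e)\,((1+M_D)\cap D')$, as in the totally ramified $\SK$ computations of \cite{hazrat-wadsworth-2011}. A cleaner route is counting. By Cor.~\ref{cor:obstruction}(iii), $\mu_K\cong\bH\times\mu_{\ov K}$, where $\bH$ is the $p$-part (or trivial) and $e$ is prime to $p$ by tameness. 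Hence $\mu_K/\mu_K(e)\cong\bH\times\mu_{\ov K}/\mu_{\ov K}(e)$. The extension given by (\ref{SESforH}) splits, because its kernel is $p$-primary and its quotient has no $p$-torsion; the same argument was used in Prop.~\ref{prop:TKvsSKforD}. So $\TK(D)\cong\mu_K/\mu_K(e)$.

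For (iii), use the exact sequence $1\to\SK(D)\to\TK(D)\to\TK(D)/\SK(D)\to1$. Prop.~\ref{prop:TKvsSKforD} with $d_D=1$ (semiramified, computed as in Th.~\ref{theorem:graded-tk1-cases}(iii)) identifies the right term with $\bH\times(\mu_{\ov K}\cap N_{\ov D/\ov K}(\ov D^*))$. The known semiramified result $G\wedge G\to\widehat H^{-1}(G,\ov D^*)\to\SK(D)\to1$ from \cite{hazrat-wadsworth-2011}, transported via $\SK(D)\cong\SK(E)$, splices in on the left to give the four-term sequence. The Galois statements follow from those for $E$. For (iv), $R$ and $S$ yield graded maximal subfields $\gr(R)$ and $\gr(S)$ of $E$ that are unramified and totally ramified over $T$. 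Th.~\ref{theorem:graded-tk1-cases}(iv) then gives that $E$, hence $D$, is semiramified and that $\SK(D)\cong\SK(E)\cong\widehat H^{-1}(G,\ov D^*)$. The sequence then follows from the same splicing as in (iii). The main obstacle is (ii): it requires the splitting of the extension and compatibility of the prime-to-$p$ parts.
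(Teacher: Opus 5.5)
Your proposal is correct and follows essentially the same route as the paper. Part (i) comes from Th.~\ref{thm:valued-tk1-exact-sequence} with the unramified graded case. Part (ii) uses the splitting of (\ref{SESforH}), since $\bH$ is $p$-primary while $\TK(\gr(D))$ has no $p$-torsion, together with $\mu_K\cong\bH\times\mu_{\ov K}$ and $p\nmid e$. Parts (iii)--(iv) splice the sequence from Prop.~\ref{prop:TKvsSKforD} (with $d_D=1$) with the known $\SK$ results of \cite{hazrat-wadsworth-2011}. The one difference is cosmetic: in (iv) you get semiramification via the graded maximal subfields $\gr(R),\gr(S)$ and Th.~\ref{theorem:graded-tk1-cases}(iv), which tacitly uses that $R$ and $S$ are defectless over $K$ (automatic here), and the paper itself mentions this alternative alongside its citation of \cite[Cor.~4.10(iv)]{hazrat-wadsworth-2011}.
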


\begin{proof}
Let $E = \gr(D)$ and $T = \gr(K) = Z(E)$.  Note that for each case (i) - (iv) here, $E$ falls into the 
corresponding case of  Th.~\ref{theorem:graded-tk1-cases}.

For (i), suppose $D$ is unramified; so, $E$ is unramified. The short exact sequence in (i) follows from 
Th.~\ref{thm:valued-tk1-exact-sequence}, as $\TK(\ov D) \cong \TK(E_0) \cong \TK(E)$ by 
Th.~\ref{theorem:graded-tk1-cases}(i).

Next, for (ii) assume that $D$ is totally ramified.  Then, $E$ is totally ramified.  By 
Th.~\ref{theorem:graded-tk1-cases}(ii), $\TK(E) \cong \mu_{T_0}/ \mu_{T_0}(e)\cong \mu_{\ov K}/\mu_{\ov K}(e)$, 
where $e =\opn{exp}(\Gamma_E/\Gamma_T) = \opn{exp}(\Gamma_D/\Gamma_K)$.  Recall that $e$ 
is prime to $\charr(\ov K)$ by \cite[Prop.~7.72]{tignol-wadsworth-2015}.  Hence, $\mu_K(e) \cong \mu_{\ov K}(e)$
as the valuation on $K$ is Henselian.
  If ${\charr(K) = \charr(\ov K)}$,  then $\bH = 1$ and $\mu_K\cong \mu_{\ov K}$ as the valuation is Henselian;
  so, by Th.~\ref{thm:valued-tk1-exact-sequence}, ${\TK(D) \cong \TK(E) \cong \mu_K/\mu_K(e)}$.  
  On the other hand, if $\charr(K) = 0$ while $\charr(\ov K) = p >0$, 
  then $\mu_K \cong \mu_K[p] \times \mu_{\ov K}$.  
   Moreover, $\bH \cong \mu_K[p]$ has $p$-primary torsion
   while $\TK(E)$ has no $p$-torsion. It follows that the short exact sequence (\ref{SESforH}) is 
   split exact.
Therefore,
  $$
  \TK(D) \,\cong \,\bH \times \TK(E) \,\cong\, \mu_K[p] \times \mu_{\ov K}/\mu_{\ov K}(e)\, \cong \,\mu_K/\mu_K(e).
  $$
  Thus, (ii) holds in all cases.

Now, for (iii), assume that $D$ is semiramified.  Then, $E$ is also semiramified.  Hence, ${d_D = d_E = 1}$, 
and as $\ov D$ is a field, Prop.~\ref{prop:TKvsSKforD} can be restated for the present situation as the short exact 
sequence:
\begin{equation}
\label{semiramSES}
1 \,\longrightarrow\, \SK(D) \, \longrightarrow  \TK(D) \, \longrightarrow \,  
\bH \times \big(\mu_{\ov K} \cap N_{\ov D/\ov K}(\ov D^*)\big)\, \longrightarrow \, 1
\end{equation}
By \cite[(4.4)]{hazrat-wadsworth-2011}, we have the exact sequence: 
\begin{equation}
\label{semiramSK_1ES}
G \wedge G \, \longrightarrow \, \widehat H^{-1}(G, \ov D^*) \, \longrightarrow \, \SK(D)\, \longrightarrow \, 1
\end{equation}
By splicing together exact sequences (\ref{semiramSES}) and (\ref{semiramSK_1ES}), we obtain the 
exact sequence of (iii).

Finally, suppose $D$ is as in (iv). Then, $D$ is semiramified as shown in the proof of 
 \cite[Cor.~4.10(iv)]{hazrat-wadsworth-2011} (or by using the valued version of the 
 argument above for Th.~\ref{theorem:graded-tk1-cases}(iv)); so part (iii) applies here.
 In the context of (iv), it was shown in \cite[Cor.~4.10(iv)]{hazrat-wadsworth-2011}
 that  $\SK(D) \cong \widehat H^{-1}(G, \ov D^*)$.
 From this isomorphism and (\ref{semiramSES}) we obtain the short exact sequence  of (iv).
\end{proof}

Part (i) of the corollary was proved by Motiee in \cite[Th.~10(1)]{motiee-2013}
under the added assumptions that $\charr(\ov K) = \charr(K)$ (so $\bH = 1$) and 
$\charr(K) \nmid \opn{ind}(D)$.  He also proved part (ii) of the corollary in \cite[Th.~10(2)]{motiee-2013}
under the added assumption that $\charr(\ov K) = \charr(K)$.

\medskip

\begin{examples}{\phantom{.}}

\label{ex:p-adic}
{}\!\!\!(i)
Let $K$ be a local field, i.e., $K$ is a finite-degree extension of some $p$-adic field 
$\qq_p$.  Then, $K$~has a complete discrete  (so Henselian) valuation  $v$ extending the
$p$-adic valuation on $\qq_p$, and $\ov K$ is the finite field  $\ff_{p^k}$ of cardinality $p^k$
for some $k\ge 1$.  Assume that $v$ is normalized so that $\Gamma_K = \zz$.  For some $n \ge 1$,
let $R$ be the unique (up to isomorphism) unramified field extension of $K$ of degree $n$.  So, 
$\ov R = \ff_{p^{kn}}$ and $R$ is cyclic Galois over $K$, as $\ov R$ is cyclic Galois over $\ov K$.
Choose a generator $\sigma$ of~$\opn{Gal}(R/K)$;  then $\sigma$ induces a $\ov K$-automorphism of 
$\ov R$, and $\opn{Gal}(\ov R/\ov K) = \langle \ov \sigma\rangle$. Choose any $\pi \in M_K$ with 
$v(\pi) = 1$.  Let $D = (R/K, \sigma, \pi)$, which is the cyclic algebra over $K$ generated by $R$ and 
an element $j$, such that $jrj^{-1} = \sigma(r)$ for all $r\in R$ and $j^n = \pi$.  So, $R$ is a maximal
subfield of~$D$.  Let $S = K[j] \subseteq D$; so, $S$ is a field (as $X^n - \pi$ is irreducible in $K[X]$)
which is a maximal subfield of~$D$, and is totally ramified over~$K$.  Furthermore, $D$ is a division 
ring, as $\pi^\ell$ is not in the image of the norm map $N_{R/K}$ for $1 \le \ell < n$.  Thus, we are in 
the situation of Cor.~\ref{cor:TK_1(D)cases}(iv).  So, $\ov D = \ov R$, and as $G = \opn{Gal}(\ov D/\ov K)$
is a cyclic group, Hilbert 90 yields $\widehat H^{-1}(G, \ov D^*) = 1$.  Moreover,  as $\ov D$ is finite, 
$N_{\ov D/\ov K}$ maps $\ov D^*$ onto $\ov K^*$.  Thus, Cor.~\ref{cor:TK_1(D)cases}(iv) yields
$$
\TK(D) \, \cong\, \bH \times \big(\mu_{\ov K} \cap  N_{\ov D/\ov K}(\ov D^*)\big) \,
\cong \mu_K[p] \times \mu_ {\ov K} \, \cong \,\mu_K.
$$
In fact, every division algebra index $n$ with center a local field $K$ is of the type described here,
and is determined up to isomorphism by the choice of $\sigma$.  

{}\!\!\!(ii) 
As a specific case of (i), let $K = \qq_2$, and let $D$ be the quaternion algebra 
$\big (-3, 2\big/ \qq_2 \big)$, i.e., the algebra generated over $\qq_2$ by $i$ and $j$, 
such that $i^2 = -3$, $j^2 = 2$, and $ij = -ji$.   As $\mu(\ov{\qq_2}) = 1$, we have 
$\TK(D) \cong \bH \cong \{1, -1\}$.  But note that $-1 = [i,j] \in D'$ and $-1 = 1 + (-2) \in 1+M_D$.  Hence, while 
$\bH \cong \{1,-1\}$, both $1$ and $-1$ represent the trivial element  of $\bH$.
 To identify the nontrivial element 
of $\bH$, consider $1+j\in 1+M_D$.  Note that 
$$
\Nrd_D(1+j) \,=\, N_{\qq_2[j]/\qq_2}(1+j) \,=\, (1 + j)(1-j) \,=\, -1.
$$
Thus, $1+j \in \Dmu \setminus D'$, so $(1+j)((1+M_D)\cap D')$ is the nontrivial element of $\bH$.
\end{examples}

\section{Stability of $\operatorname{SK}_1$ and $\operatorname{TK}_1$}\label{sec:stability}

The exact diagram of Section~\ref{sec:graded-tk1} describes $\TK(E)$ for a graded division algebra $E$
in terms of $E_0$ and $\Gamma_E$
and the conjugation action of $E^*$ on $E_0$. In this section we approach $\TK(E)$ from a different 
direction:  We prove for it the \lq\lq stability theorem" saying that $\TK(E) \cong \TK(Q)$, where 
$Q = q(E)$ is the division ring of central quotients of $E$.  At the same time, we give a new proof of 
the corresponding stability result for $\SK(E)$, which is substantially simpler than the original proof in 
 \cite[Th.~5.7]{hazrat-wadsworth-2011}. Applications of  
stability for $\TK(E)$ will be given in Section~\ref{sec:stability-consequences}.

The original stability theorem (and the origin of the term \lq\lq stability") was Platonov's result 
in~\cite{platonov-stability-1976} that if $D$ is a division algebra finite-dimensional over its center $K$, 
then ${\SK(D(x)) \cong \SK(D)}$, where $D(x) = q(D[x]) = D[x] \otimes_{K[x]}K(x)$.  Later Platonov and 
Yanchevski\v{\i} generalized this result in~\cite{platonov-yanchevskii-1979} by giving calculations for 
$\SK(D(x;\varphi))$ in terms of data for 
$D$ and the automorphism~$\varphi$ of~$D$, where $D(x; \varphi)$ is the division ring of 
central quotients of the twisted polynomial ring $D[x;\varphi]$.  It was the recognition 
that the work in \cite{platonov-yanchevskii-1979} could be reformulated and clarified as making calculations
for $\SK$ of the graded division algebra $E = D[x, x^{-1};\varphi]$ and then showing that 
$\SK(E) \cong \SK(q(E))$ that led to the latter step being called a stability theorem.

As in the preceding section, let $E$ be a graded division algebra finite-dimensional over its 
center~$T$; let $q(T)$ be the quotient field  of the integral domain $T$, and let 
$Q= E \otimes_T q(T)$, the  quotient division ring of $E$.

\begin{lemma}
\label{lem:Q'capE*}
$$
Q' \cap E^* \, = \, E'.
$$
\end{lemma}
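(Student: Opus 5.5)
The inclusion $E' \subseteq Q'\cap E^*$ is clear, since $E^*\subseteq Q^*$. For the reverse inclusion, I plan to build a group homomorphism $\lambda\colon Q^*\to E^*$ that restricts to the identity on $E^*$. Any such $\lambda$ maps $Q'=[Q^*,Q^*]$ into $[E^*,E^*]=E'$. So for $a\in Q'\cap E^*$ we would get $a=\lambda(a)\in E'$, which is what we want.

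\textbf{Step 1: the lowest-term map on $E$.} Fix a total ordering of the torsion-free group $\Gamma_E$ compatible with addition. For a nonzero $x=\sum_\gamma x_\gamma\in E$, let $\lambda(x)=x_{\gamma_0}$, where $\gamma_0$ is the least $\gamma$ with $x_\gamma\neq 0$. Then $\lambda(x)\in E^*$, because every nonzero homogeneous element of $E$ is a unit.

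For nonzero $x,y\in E$, the component of $xy$ of least degree $\gamma_0+\delta_0$ is exactly $\lambda(x)\lambda(y)$. This product is nonzero since it is a product of two units. Hence $xy\neq 0$, $\lambda(xy)=\lambda(x)\lambda(y)$, and $\lambda$ is the identity on $E^*$. If $t\in T\setminus\{0\}$, then each homogeneous component of $t$ lies in $T$, because $T$ is a graded subring of $E$. So $\lambda(t)\in T^*$ is central.

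\textbf{Step 2: extension to $Q^*$.} Every element of $Q^*$ has the form $xt^{-1}$ with $x\in E\setminus\{0\}$ and $t\in T\setminus\{0\}$, since $Q=E\otimes_T q(T)$. Define $\lambda(xt^{-1})=\lambda(x)\lambda(t)^{-1}$.

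This is well defined. If $xt^{-1}=ys^{-1}$, then $xs=yt$ in $E$, because $E$ embeds in $Q$. Step 1 then gives $\lambda(x)\lambda(s)=\lambda(y)\lambda(t)$, and since $\lambda(s),\lambda(t)$ are central this yields $\lambda(x)\lambda(t)^{-1}=\lambda(y)\lambda(s)^{-1}$.

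It is multiplicative. Since $t$ is central, $(xt^{-1})(ys^{-1})=(xy)(ts)^{-1}$. Applying Step 1 to $xy$ and to $ts$, and using that $\lambda(t),\lambda(s)$ are central, gives $\lambda\big((xt^{-1})(ys^{-1})\big)=\lambda(xt^{-1})\,\lambda(ys^{-1})$.

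Thus $\lambda\colon Q^*\to E^*$ is a group homomorphism that is the identity on $E^*$, and the argument in the first paragraph finishes the proof.

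\textbf{Main obstacle.} The delicate point is Step 1: we need the lowest-degree component of a product to be the product of the lowest-degree components, and in particular to be nonzero. This depends on having a total ordering of $\Gamma_E$ and on every nonzero homogeneous element being a unit. Once that is in place, the extension to $Q^*$ is routine, because the denominators come from the center $T$.
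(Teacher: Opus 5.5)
Your proposal is correct and follows essentially the same approach as the paper: a retraction $\lambda\colon Q^*\to E^*$ with $\lambda|_{E^*}=\mathrm{id}_{E^*}$ forces $\lambda(Q')\subseteq E'$, hence $Q'\cap E^*\subseteq E'$. The only difference is that the paper cites \cite[p.~124]{hazrat-wadsworth-2011} or \cite[Sec.~3.2.5]{tignol-wadsworth-2015} for the existence of $\lambda$, while you construct it explicitly via lowest-degree components with respect to a total ordering of $\Gamma_E$; your construction is sound.
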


\begin{proof}
By \cite[p.~124]{hazrat-wadsworth-2011} or \cite[Sec.~3.2.5]{tignol-wadsworth-2015} there is a group 
homomorphism $\lambda\colon Q^* \to E^*$ such that $\lambda |_{E^*} = \opn{id}_{E^*}$. 
Then, $\lambda(Q') \subseteq E'$. 
So, if  $e \in Q' \cap E^*$, then $e = \lambda(e) \in E'$.  Thus, $Q' \cap E^* \subseteq E'$, and the reverse
inclusion is clear.
\end{proof}

It follows immediately from the lemma that 
\begin{equation}
\label{isoforTK_E}
E^*/E' \, \cong\, E^*Q'/Q'.
\end{equation} 
This isomorphism shows that in order to prove stability for $\TK$ it suffices to show that the torsion
part of $Q^* /Q'$ lies in $E^*Q'/Q'$.  The next proposition yields this.

\begin{proposition}
\label{prop:cokernel-torsion-free}
The factor group $Q^*/(E^*Q')$ is torsion-free.
\end{proposition}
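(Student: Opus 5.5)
The plan is to exploit the retraction $\lambda\colon Q^*\to E^*$ used in the proof of Lemma~\ref{lem:Q'capE*}, together with reduced norms. Take $q\in Q^*$ with $q^n\in E^*Q'$ for some $n\ge 1$, say $q^n=ec$ with $e\in E^*$ and $c\in Q'$. Set $r=q\,\lambda(q)^{-1}$. Since $\lambda(q)\in E^*$, it suffices to show $r\in E^*Q'$.

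First I would show that $r$ is again torsion modulo $Q'$. Work in the abelian group $Q^*/Q'$. Applying $\lambda$ to $q^n=ec$ gives $\lambda(q)^n=e\,\lambda(c)$ with $\lambda(c)\in E'\subseteq Q'$. Hence $r^n\equiv q^n\lambda(q)^{-n}\equiv ec\,\lambda(c)^{-1}e^{-1}\equiv 1 \pmod{Q'}$. So we may replace $q$ by $r$ and assume $\lambda(q)=1$, i.e.\ $q$ lies in $\ker\lambda$.

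Recall that $\ker\lambda$ consists of the elements whose "leading term" is $1$, with respect to a total order on $\Gamma_E$ (these are the elements $a/t$ with $a\in E$, $t\in T$, whose leading homogeneous components agree). The next step is to understand $\Nrd_Q(q)$. From $q^n\in Q'$ we get $\Nrd_Q(q)^n=1$, so $\Nrd_Q(q)\in\mu_{q(T)}$. I would then argue that $\mu_{q(T)}\subseteq T_0$. The reason is that an element of $q(T)$ whose power is a homogeneous unit of $T$ must itself be homogeneous: compare the lowest and highest components of numerator and denominator, using that $T$ is a domain graded by a totally ordered group. By compatibility of $\lambda$ with reduced norms, $\Nrd_E(\lambda(q))=\lambda(\Nrd_Q(q))$, so $\Nrd_Q(q)=\lambda(\Nrd_Q(q))=\Nrd_E(1)=1$. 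Thus $q\in Q^{(1)}\cap\ker\lambda$. So it remains to show a congruence-type statement:
$$Q^{(1)}\cap\ker\lambda\ \subseteq\ E^*Q'.$$

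I expect this last inclusion to be the main obstacle, since it is essentially the hard direction of $\SK$ stability and so cannot be quoted from \cite{hazrat-wadsworth-2011} if the goal is a new proof. My first attempt would use a Wedderburn factorization. The reduced characteristic polynomial of $q$ over $q(T)$ factors over $Q$ as a product of linear factors $X-q_i$, where the $q_i$ are conjugates of $q$. Evaluating the constant term then expresses $\Nrd_Q(q)=1$ as $\pm$ a product of conjugates of $q$, which writes $q^{m}$ (with $m=\ind(E)$) as a product of commutators times a sign. I would combine this with the leading-term condition $\lambda(q)=1$, aiming to write $q$ itself, not just a power, as an element of $E^*$ times commutators. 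Concretely, I would try to choose the conjugating elements in the factorization so that they have homogeneous leading terms, which lets $\lambda$ track the correction factor inside $E^*$. If this fails, the fallback is an approximation argument. Modify $q$ by elements of $E^*Q'$ so that it lies in a filtration subgroup $1+M^{(k)}$ for $k$ arbitrarily large (in the valuation on $Q$ induced by the total order). Then invoke a Hensel-type or density argument showing that such highly congruent reduced-norm-one elements are commutators, in analogy with the congruence theorem (\ref{SKcongth}).
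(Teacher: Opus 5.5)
Your opening reduction is sound. After replacing $q$ by $q\lambda(q)^{-1}$ and using $\mu_{q(T)}=\mu_{T_0}$ and $\lambda\circ\Nrd_Q=\Nrd_E\circ\lambda$, every torsion class in $Q^*/(E^*Q')$ is represented by an element of $Q^{(1)}\cap\ker\lambda$. (Your degree comparison for $\mu_{q(T)}\subseteq T_0$ only shows that the lowest and highest degrees of the element coincide, which does not force homogeneity. Use instead that $T$ is integrally closed, as in the proof of Theorem~\ref{prdecompTK_1E}.)

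The trouble is that this reduction is reversible, so it moves the difficulty rather than shrinking it. If $s\in Q^{(1)}\cap\ker\lambda$ lies in $E^*Q'$, say $s=ec$, then $1=\lambda(s)=e\,\lambda(c)$ forces $e\in E'$, hence $s\in Q'$. Also every element of $Q^{(1)}$ is torsion modulo $Q'$. So the Proposition is \emph{equivalent} to $Q^{(1)}\cap\ker\lambda\subseteq Q'$, i.e., to surjectivity of $\SK(E)\to\SK(Q)$, the hard half of $\SK$-stability. That is exactly the step you leave open, and neither suggested route closes it.

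Wedderburn's factorization yields only $\Nrd_Q(q)\equiv q^{\ind(E)}$ modulo $Q'$, i.e., $q^{\ind(E)}\in Q'$, which you already know. It holds equally for representatives of nontrivial classes of $\SK(Q)\cong\SK(E)$, which can be nonzero, so it cannot by itself put $q$ in $Q'$. Choosing conjugators ``with homogeneous leading terms'' adds no constraint, since every element of $Q^*$ has one.

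The approximation fallback fails because the valuation on $Q$ induced by the grading is not Henselian (already $q(T_0[t,t^{-1}])=T_0(t)$ with the $t$-adic valuation is not), and $Q$ is not complete. So neither Hensel lifting nor the congruence theorem (\ref{SKcongth}) is available. Working in a Henselization could at best give $q\in Q_h'$, and descending from $Q_h'$ to $Q'$ is again a stability problem.

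What is missing is an invariant on $Q^*$ that vanishes on $E^*Q'$ and takes values in a visibly torsion-free group; reduced norms cannot do this. The paper uses the divisor map. For $Q_\sigma=D(x;\sigma)$ there is a surjection $\delta\colon Q_\sigma^*\to\opn{Div}(D[x;\sigma])$ onto a free abelian group with $\ker\delta=D^*Q_\sigma'$ \cite[Prop.~5.3]{hazrat-wadsworth-2011}. For finitely generated $\Gamma_E$, the paper:
\begin{enumerate}
\item writes $Q$ as an iterated extension $Q_r=Q_{r-1}(x_r;\varphi_r)$ along a basis of $\Gamma_E$;
\item shows $\delta_r(Q_r^*\cap Q')=I_{G_r}(\divr)$ by computing $\delta_r$ on commutators of homogeneous elements, via Lemma~\ref{lem:Q'capE*} applied to an intermediate graded algebra;
\item uses Lemma~\ref{lemma:permmod} to get exact sequences $1\to A_{r-1}\to A_r\to B_r\to 1$ with $B_r$ isomorphic to a direct summand of $\divr$.
\end{enumerate}
Induction then gives torsion-freeness of $A_n=Q^*/(E^*Q')$, and a direct-limit argument handles arbitrary $\Gamma_E$.

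Quoting \cite[Th.~5.7]{hazrat-wadsworth-2011} would finish your reduction immediately. However, the paper deduces a new proof of that theorem from this Proposition, so that shortcut would be circular there.
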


Before proving the proposition, we recall an easy well-known lemma about permutation modules, for which we do not 
have a good reference.

\begin{lemma}
\label{lemma:permmod}
Let $\scB$ be a nonempty set, and let $P$ be the free abelian group with base $\scB$.  Let $G$ be any group which 
acts on $\scB$, and extend this action as usual to a $\zz$-linear action of $G$ on $P$.  Thus, $P$ is a permutation 
$G$-module.  Let 
$I_G(P) = \langle b - \sigma(b)\mid b\in \scB, \sigma \in G\rangle \subseteq P$.  Let $\scS$ be the set of orbits of $B$
 under the $G$-action, and for each orbit $\scO_s \in \scS$ choose any $d_s \in \scO_s$.  Then, 
$$
P \, = \,I_G(P) \,\textstyle{\bigoplus} \, \big(\underset{\scO_s \in \scS}\bigoplus \zz d_s\big).  
$$
\end{lemma}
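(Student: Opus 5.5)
The plan is to exhibit the decomposition with an explicit retraction. Let $F = \bigoplus_{\scO_s\in\scS}\zz d_s$. Define a $\zz$-linear map $\pi\colon P\to F$ on the basis $\scB$ by $\pi(b) = d_s$, where $\scO_s$ is the orbit containing $b$. Since $\scB$ is a basis of the free abelian group $P$, this determines a well-defined homomorphism. It restricts to the identity on $F$, because $\pi(d_s)=d_s$. Hence $P = \ker\pi \oplus F$. The statement then reduces to showing $\ker\pi = I_G(P)$.

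For the inclusion $I_G(P)\subseteq\ker\pi$, note that $b$ and $\sigma(b)$ lie in the same orbit. Thus $\pi(b-\sigma(b)) = d_s - d_s = 0$ for each generator of $I_G(P)$.

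For the reverse inclusion $\ker\pi\subseteq I_G(P)$, first observe that for any $b\in\scO_s$ there is $\sigma\in G$ with $\sigma(d_s)=b$. Hence $b - d_s = -(d_s-\sigma(d_s))\in I_G(P)$. Now take $x=\sum_b n_b b\in\ker\pi$, a finite sum. Write
$$
x=\sum_b n_b(b-d_{s(b)}) + \sum_b n_b d_{s(b)}.
$$
The first sum lies in $I_G(P)$. The second sum equals $\pi(x)=0$. So $x\in I_G(P)$.

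This gives $P = I_G(P)\oplus F$. The last check is that $F$ is indeed free on the $d_s$: distinct orbits give distinct basis elements of $\scB$, so the sum $\bigoplus \zz d_s$ is direct. There is no real obstacle here; the only step needing care is the reverse inclusion, which is handled by the rewriting $b = (b-d_s)+d_s$. No finiteness of $G$ or $\scB$ is needed, since every element of $P$ is a finite combination.
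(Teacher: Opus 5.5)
Your proof is correct and follows essentially the same route as the paper. The paper maps $P$ onto the free abelian group $\zz[\scS]$ on the orbits and proves the kernel is $I_G(P)$ by the same orbit-by-orbit rewriting $b=(b-d_s)+d_s$. It then splits via the section $\scO_s\mapsto d_s$, whereas you compose these two maps into a single retraction $P\to F$, which is only a cosmetic difference.
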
 

\begin{proof}
Let $\mathbb Z[\mathcal S]$ be the free abelian group with base $\mathcal S$. The orbit map $\mathcal B\to\mathcal S$ 
extends to an epimorphism $\pi\colon P\to\mathbb Z[\mathcal S]$, where $\pi(b)=\mathcal O_s$ whenever 
$b\in\mathcal O_s$. Since $b$ and $\sigma(b)$ belong to the same orbit, $I_G(P)\subseteq\ker\pi$. For the reverse 
inclusion, let $a=\sum_{b\in\mathcal B}n_b b\in\ker\pi$. For each orbit $\mathcal O_s\in\mathcal S$, we then have 
$\sum_{b\in\mathcal O_s}n_b=0$, and hence
$$
        \textstyle{\underset{b\in\mathcal O_s}\sum}n_b b
       \ =\,
        \textstyle{\underset{b\in\mathcal O_s\setminus\{d_s\}}\sum}
        n_b(b-d_s).
$$
Since $b$ and $d_s$ lie in the same orbit, there is $\sigma\in G$ such that $\sigma(d_s)=b$; therefore 
$b-d_s\in I_G(P)$. Thus $a\in I_G(P)$, proving that $\ker\pi=I_G(P)$. The homomorphism 
$\mathbb Z[\mathcal S]\to P$ given by $\mathcal O_s\mapsto d_s$ is a section of $\pi$; so the lemma follows.
\end{proof}

\begin{proof}[Proof of Proposition~\ref{prop:cokernel-torsion-free}]
We first recall the rank-one divisor construction of \cite[Rem.~5.1 and Prop.~5.3]{hazrat-wadsworth-2011}. Let $D$ be 
a division ring finite-dimensional over its center, let $\sigma\in\opn{Aut}(D)$ be such that $\sigma|_{Z(D)}$ has finite 
order, and let 
$\mathcal T$ be the twisted polynomial ring $D[x;\sigma]$ and $Q_\sigma = q(\mathcal T)=D(x;\sigma)$. 
Let $\mathcal B$ be the set of isomorphism classes of simple left $\mathcal T$-modules.
The group $\opn{Div}(\mathcal T)$ is the free abelian group with basis $\mathcal B$. For any $f \in \mathcal T$ with 
$f \ne 0$, the left $\scT$-module 
$\scT/\scT f$ has  finite dimension as  $D$-vector space; hence, it has finite length as a $\scT$-module.
The divisor $\delta(f)$ in $\opn{Div}(\scT)$ is defined by taking a composition series for $\scT/\scT f$ 
and letting $\delta(f)$ be the sum of the classes of each  factor module in the series.  By the Jordan-H\" older Theorem, 
$\delta(f)$ is independent of the choice of  composition series.  It is easy to check that $\delta(fg) = \delta(f) + \delta(g)$ 
for any $f,g\in \scT \setminus\{0\}$.  
One then extends $\delta$ to a well-defined map  $\delta\colon Q_\sigma^*\to\opn{Div}(\scT)$ given by 
$\delta(fs^{-1})=\delta(f)-\delta(s)$ for any  $f\in\mathcal T\setminus\{0\}$ and $s\in Z(\mathcal T)\setminus\{0\}$.
Clearly, $\delta$ is a group homomorphism.  Also, $\delta$ is surjective, since, as $\scT$ is a principal left ideal domain
(see, e.g., \cite[p.~3]{jacobson-1996})
each generator of $\opn{Div}(\scT)$ in $\mathcal B$ lies in $\opn{im}(\delta)$.  
Moreover, it is shown in \cite[Prop.~5.3]{hazrat-wadsworth-2011} that 
$$
        \ker\delta=D^*Q_\sigma',
        \qquad\text{so}\qquad
        Q_\sigma^*/(D^*Q_\sigma')\cong\opn{Div}(\mathcal T).
$$
Note also that as $\scT x$ is a maximal left ideal of $\scT$, we have 
$\delta(x) = [\scT/\scT x]\in \mathcal B$.
We will use this repeatedly with $D=Q_{r-1}$, $\mathcal T=\mathcal T_r=Q_{r-1}[x_r;\varphi_r]$, and $Q_\sigma=Q_r$, as 
defined in the next paragraphs.

We first assume  that $\Gamma_E$ is finitely-generated. If $\Gamma_E=0$, then $E=E_0$ and $Q=E$, so there is 
nothing to prove. Thus, assume $\Gamma_E$ has finite rank $n\ge 1$. Since $\Gamma_E$ is torsion-free, it has a basis 
$\varepsilon_1,\ldots,\varepsilon_n$ of $\Gamma_E$. For $0\leq r\leq n$, set
\begin{equation*}
        \Delta_r=\zz\varepsilon_1+\ldots+\zz\varepsilon_r,\qquad
        E_r=\underset{\gamma\in\Delta_r}{\textstyle{\bigoplus}}E_\gamma,
        \qquad
        Q_r=q(E_r).
\end{equation*}

Thus $E_0$ is the degree-zero division ring, $E_n=E$, and $Q_n=Q$. 
Note that $E_r$ is a sub-graded division ring of $E$ with $(E_r)_0 = E_0$ and $\Gamma_{E_r} = \Delta_r$.  
Moreover, $E_r$ is actually a graded division algebra.  To see this, 
let $T_r = E_r \cap T$, which is a graded field with $T_r \subseteq Z(E_r)$ and 
$\Gamma_{T_r}  = \Gamma_{E_r} \cap \Gamma _T$. So,
$\big | \Gamma_{E_r}: \Gamma_{T_r}\big | = \big |(\Gamma_{E_r} + \Gamma_{T}): \Gamma_{T}\big| 
\le |\Gamma_E :\Gamma_T| $.
 Then,
$$
[E_r: Z(E_r)] \,\le \, [E_r: T_r] \,=\, [E_0:T_0] \,\big | \Gamma_{E_r}: \Gamma_{T_r}\big | \, 
\le \, [E_0:T_0]\, |\Gamma_E :\Gamma_T| \, =\, [E:T] \,<\, \infty,
$$
as needed for $E_r$ to be a graded division algebra.
Note also that the inclusion $E_r \subseteq Q$ extends canonically  to an injection 
$Q_r \hookrightarrow Q$.  Thus we may view $Q_r \subseteq Q$.

For $j$ with $1\leq j\leq n$, choose and fix $x_j\in E_{\varepsilon_j}\setminus\{0\}$. 
When $j \ge r$, let $\varphi_j$ be the inner automorphism $\opn{int}(x_j)$ of $Q$.  
As each  $x_j\in E^*$, $\varphi_j$ restricts to a (graded) automorphism of $E_r$
and of $E_{r-1}$, hence also to an automorphism of $Q_{r-1}$; we denote these
restrictions of $\varphi_j$ also by $\varphi_j$.   For each $j$ there is an integer $k_j >0$
such that $k_j\varepsilon _j \in \Gamma_T$.  Then $x_j^{k_j}= e_0z$ with 
$e_0 \in E_0$ and $z\in T$. Hence, $\varphi_j^{k_j} = \opn{int}(e_0)$, which is trivial
on $Z(E_0)$, so also on  $Z(E_r)$, $Z(E_{r-1})$ and on $Q_{r-1}$.
Note that $E_r=E_{r-1}[x_r,x_r^{-1};\varphi_r]$ and $Q_r=Q_{r-1}(x_r;\varphi_r)$.

Let $Q'=Q_n'$. For $0\leq r\leq n$, put 
\begin{equation}
\label{def:N_rM_rA_r}
N_r\,=\,Q_r^*\cap Q', \qquad  M_r\,=\,E_r^*\cap Q', \qquad \text{and} \qquad
A_r\,=\,Q_r^*/(E_r^*N_r).
\end{equation}
We prove by induction on $r$ that $A_r$ is torsion-free. Since $A_0=1$ and $A_n=Q^*/(E^*Q')$, this will prove the 
finitely-generated case of the proposition. 
To obtain this, we will prove the claim stated below.  Fix an $r \in \{1,\dots,n\}$, put $\mathcal T_r=Q_{r-1}[x_r;\varphi_r]$ and  
$\divr=\opn{Div}(\mathcal T_r)$, and let $\delta_r\colon Q_r^*\to\divr$ be the divisor 
map.  Let $\scB_r$ be the basis of $\divr$ consisting of isomorphism classes of simple left $\scT_r$-modules.
Put
$$
        G_r \, =\, \langle\varphi_{r+1},\ldots,\varphi_n\rangle
        \subseteq \opn{Aut}(Q_r).
$$
Here, if $r=n$, then the group $G_r$ is understood to be the trivial group. 
Each $\sigma \in G_r$ is conjugation by an element of $E^*$; hence $\sigma(x_r^iE_0^*) = x_r^iE_0^*$
for each $i \ge 0$, which implies that $\sigma$ restricts to an automorphism of $\scT_r$.  Hence, $G_r$
acts on $\scT_r$, and thereby on $\scB_r$.  This action extends to an action on $\divr$, making
$\divr$ into a permutation $G_r$-module.   We {\it {claim}} 
that $\delta_r(N_r)=I_{G_r}(\divr)$, where 
$I_{G_r}(\divr)= \langle \beta-\sigma(\beta)\mid \beta\in \scB_r,\ \sigma\in G_r\rangle
= \langle \eta-\sigma(\eta)\mid \eta\in \divr,\ \sigma\in G_r\rangle$. 

We prove the claim: 
For the inclusion $I_{G_r}(\divr)\subseteq\delta_r(N_r)$, take a 
generator $\eta-\sigma(\eta)$ of $I_{G_r}$, where $\eta\in \scB_r$. 
Choose $b\in Q_r^*$ with $\delta_r(b)=\eta$, and choose a monomial $u$ in 
$x_{r+1}^{\pm1},\ldots,x_n^{\pm1}$  with $\opn{int}(u)$ inducing $\sigma$ on $Q_r$. Then 
$bub^{-1}u^{-1}\in Q_r^*\cap Q'=N_r$, and $\delta_r(bub^{-1}u^{-1})=\eta-\sigma(\eta)$. For the reverse inclusion, 
If $r=n$, then $N_n=Q'$ and $\delta_n(N_n)=0=I_{G_n}(\divr)$, so there is nothing to prove. 
Assume $r<n$ and put
$$
        C\,= \,Q_r[x_{r+1},x_{r+1}^{-1},\ldots,x_n,x_n^{-1}; \varphi_{r+1}, \ldots, \varphi_n]\,\subseteq \, Q;
$$
so, $C$ is a graded division algebra and, as $E \subseteq C \subseteq Q$, we have  $q(C) = Q$.
Now, $N_r = Q_r^*\cap Q' \subseteq C^* \cap Q' $. Hence,  by Lemma~\ref{lem:Q'capE*} applied to $C$, 
we have  $N_r \subseteq C'$.
So, it suffices to show that 
$\delta_r$~maps any 
commutator in $C'$  to $I_{G_r}(\divr)$. 
For this, note that 
 every nonzero homogeneous element of~$C$ has the form $bu$, where $b\in Q_r^*$ and $u$ is a monomial in 
 $x_{r+1}^{\pm1},\ldots,x_n^{\pm1}$. Let $b,c\in Q_r^*$, and let $u,v$ be such monomials. Write 
 $\sigma_u=\opn{int}(u)|_{Q_r}$ and $\sigma_v=\opn{int}(v)|_{Q_r}$, and put $m=uvu^{-1}v^{-1}$. Then 
 $m\in E_0^*\subseteq Q_{r-1}^*$, whence $\delta_r(m)=0$. Since
$$
        [bu,cv]=b\,\sigma_u(c)\,m\,\sigma_v(b^{-1})\,c^{-1},
$$
it follows that
$$
        \delta_r([bu,cv])
        =
        \big(\delta_r(b)-\sigma_v(\delta_r(b))\big)
        +
        \big(\sigma_u(\delta_r(c))-\delta_r(c)\big),
$$
which lies in $I_{G_r}(\divr)$. Hence $\delta_r(N_r)\subseteq I_{G_r}(\divr)$, and therefore $\delta_r(N_r)=I_{G_r}(\divr)$, 
completing the proof of the claim.

We now continue the induction. Consider the following commutative diagram:
$$
\begin{tikzcd}[column sep=2.8em,row sep=2.4em]
1 \arrow[r]
& E_{r-1}^*/M_{r-1} \arrow[r] \arrow[d]
& E_r^*/M_r \arrow[r] \arrow[d]
& E_r^*/(E_{r-1}^*M_r) \arrow[r] \arrow[d,"\theta"]
& 1
\\
1 \arrow[r]
& Q_{r-1}^*/N_{r-1} \arrow[r]
& Q_r^*/N_r \arrow[r]
& Q_r^*/(Q_{r-1}^*N_r) \arrow[r]
& 1 .
\end{tikzcd}
$$
The rows are exact, since $M_{r-1}=E_{r-1}^*\cap M_r$ and $N_{r-1}=Q_{r-1}^*\cap N_r$. We now analyze  the 
right column:  Since $Q_r' \subseteq  Q_r^* \cap Q' = N_r$, we have 
$\opn{ker}(\delta_r) = Q_{r-1}^*Q_r' \subseteq
Q_{r-1}^*N_r$.  Hence, $\delta_r$ induces an isomorphism 
 $$
 \overline{\delta_r}\colon Q_r^*/(Q_{r-1}^*N_r) \,\overset{\sim}{\longrightarrow}\,
   \delta_r(Q_r^*)/\delta_r(Q_{r-1}^*N_r) \,=\, \divr/I_{G_r}(\divr).
   $$
   Since $E_r^* = E_{r-1}^*\langle x_r\rangle$, the domain of the map $\theta$ is the cyclic group generated by 
   the image of $x_r$.  Therefore, the image of $\ov {\delta_r} \circ \theta$ is 
   $\big(\zz\delta_r (x_r)+ I_{G_r}(\divr)\big)\big/I_{G_r}(\divr)$,
   which is an infinite cyclic group, by Lemma~\ref{lemma:permmod} applied to the permutation $G_r$-module
   $\divr$.  Since $\ov {\delta_r} \circ \theta$ maps its domain, a cyclic group,  onto an infinite cyclic group,
   the map $\theta$ must be injective. Therefore,  
the $5$-Lemma yields a short exact sequence of cokernels of the vertical maps in the diagram,
$$
        1\longrightarrow A_{r-1}\longrightarrow A_r\longrightarrow B_r
        \longrightarrow 1,
$$
where $A_r$ and $A_{r-1}$ are as in (\ref{def:N_rM_rA_r}) and 
$$
        B_r\,=\,Q_r^*\big/(Q_{r-1}^*E_r^*N_r).
$$
Since we saw above that $\opn{ker}(\delta_r) \subseteq
Q_{r-1}^*N_r$, we have $\opn{ker}(\delta_r) \subseteq Q_{r-1}^*E_r^*N_r = Q_{r-1}^*\langle x_r \rangle N_r$.
Hence,
$$
B_r \,\cong\,\delta_r(Q_r^*) \big/\delta_r\big(Q_{r-1}^*\langle x_r \rangle N_r\big)
\, =\, \divr\big/\big(\zz\delta_r (x_r)+ I_{G_r}(\divr)\big),
$$
which is isomorphic to a direct summand of $\divr$ by Lemma~\ref{lemma:permmod}; so, $B_r$ is torsion-free.

Now $A_0=1$ is torsion-free. If $A_{r-1}$ is torsion-free, then the exact sequence above and the torsion-freeness of 
$B_r$ imply that $A_r$ is torsion-free. Hence, by induction, $A_n=Q^*/(E^*Q')$ is torsion-free. This proves the
 finitely-generated case of the proposition.

Now assume that $\Gamma_E$ is not finitely-generated. Since $E$ is finite-dimensional over its center $T$, the group 
$\Gamma_E/\Gamma_T$ is finite.  Choose a finite set $\mathcal R\subseteq \Gamma_E$  such that 
$\mathcal R + \Gamma_T = \Gamma_E$.  Let~$\mathcal D$~be the directed  (by inclusion) set of all 
finitely-generated subgroups $\Delta\subseteq \Gamma_E$ such that $\mathcal R\subseteq \Delta$.  For 
$\Delta\in\mathcal D$, put
$$
        E_\Delta=\underset{\delta\in\Delta}{\textstyle{\bigoplus}} E_\delta,\quad\text{and}\quad
        T_\Delta=T\cap E_\Delta=\underset{\delta\in\Delta\cap\Gamma_T}{\textstyle{\bigoplus}} T_\delta.
$$
We claim that $T_\Delta = Z(E_\Delta)$.  Clearly $T_\Delta \subseteq  Z(E_\Delta)$.  But also, as 
$\Delta + \Gamma_T= \Gamma_E$ and each homogeneous component $E_\gamma$ of $E$ is a $1$-dimensional 
$E_0$ vector space, we have $E_\Delta^*T^* = E^*$.  Any element $z$ of $Z(E_\Delta)$ centralizes $E_\Delta^*$ and 
$T^*$, so all of $E^*$, hence all of $E$; so $z\in T$.  Thus $Z(E_\Delta) \subseteq T \cap E_\Delta = T_\Delta$, 
proving this claim.

It follows from this claim that $E_\Delta$ is finite-dimensional over its center $T_\Delta$.  Indeed, by the fundamental 
equality, $[E_\Delta:T_\Delta]=[E_0:T_0]\,|\Delta/(\Delta\cap\Gamma_T)| = [E:T]< \infty$.   Thus, the finitely-generated 
case applies to each $E_\Delta$.

Let $Q_\Delta = q(E_\Delta)$.   For every $\Delta, \Delta' \in \mathcal D$ with $\Delta \subseteq \Delta'$, it follows from 
the claim that 
$$
Z(E_\Delta) \subseteq Z(E_{\Delta'}) \subseteq Z(E), \quad \text{hence}
\quad  Q_\Delta \subseteq Q_{\Delta'} \subseteq Q.
$$
Therefore, in addition to the direct limit $E^* = \varinjlim_{\Delta\in\mathcal D} E_\Delta^*$, we have $Z(E)  = \varinjlim_{\Delta\in\mathcal D} Z(E_\Delta)$,  so 
${Q^* = \varinjlim_{\Delta\in\mathcal D} Q_\Delta^*}$ and hence, as every element of $Q'$ is a finite product of 
commutators, ${Q' = \varinjlim_{\Delta\in\mathcal D} {Q_\Delta}'}$.

To see that $Q^*/E^* Q'$ is torsion-free, take any $q \in Q^*$ with $q^m = a c$ for some $a\in E^*$ and $c\in Q'$ and 
positive integer $m$. Because of the direct limit properties just noted, it follows that there is an $\Omega \in \mathcal D$ 
with $q \in Q_\Omega^*$, $a\in E_\Omega^*$, and $c\in {Q_\Omega}'$.  As $q^m = ac \in E_\Omega^* {Q_\Omega}'$ 
and $Q_\Omega^*/E_\Omega^*{Q_\Omega}'$ is torsion-free as $\Gamma_{E_\Omega}=\Omega$ has finite rank, it 
follows that $q \in E_\Omega^*{Q_\Omega}' \subseteq E^*Q'$, showing that $Q^*/E^* Q'$ is indeed torsion-free.
This completes the proof of the proposition.
\end{proof}

\begin{theorem}
\label{thm:stability-sk1-tk1}
\textit{The natural inclusion $E^*\hookrightarrow Q^*$ induces isomorphisms}
$$
        \operatorname{SK}_1(E)\cong \operatorname{SK}_1(Q)
        \qquad\text{and}\qquad
        \operatorname{TK}_1(E)\cong \operatorname{TK}_1(Q).
$$

\end{theorem}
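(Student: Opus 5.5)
The plan is to combine Lemma~\ref{lem:Q'capE*}, the isomorphism (\ref{isoforTK_E}), and Prop.~\ref{prop:cokernel-torsion-free}. First treat $\TK$. By Lemma~\ref{lem:Q'capE*}, the map $K_1(E)=E^*/E'\to Q^*/Q'=K_1(Q)$ induced by inclusion is injective, with image $E^*Q'/Q'$. Hence it restricts to an injection $\TK(E)\hookrightarrow \TK(Q)$ whose image is $\tau(E^*Q'/Q')$. For surjectivity, take $q\in Q^*$ with $qQ'$ torsion, say $q^m\in Q'$. Then the image of $q$ in $Q^*/(E^*Q')$ is torsion, so Prop.~\ref{prop:cokernel-torsion-free} gives $q\in E^*Q'$. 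Thus $qQ'$ lies in the torsion part of $E^*Q'/Q'$, which is the image of $\TK(E)$. This yields $\TK(E)\cong\TK(Q)$.

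For $\SK$, restrict this isomorphism. Since $\Nrd_E$ is by definition the restriction of $\Nrd_Q$ to $E$, we have $E^{(1)}=E^*\cap Q^{(1)}$, so the injection $K_1(E)\to K_1(Q)$ carries $\SK(E)$ into $\SK(Q)$. Injectivity is inherited. For surjectivity, take $q\in Q^{(1)}$. Since $\SK(Q)\subseteq \TK(Q)$, the first part lets us write $q=ec$ with $e\in E^*$ and $c\in Q'$. Then $\Nrd_E(e)=\Nrd_Q(q)\Nrd_Q(c)^{-1}=1$, so $e\in E^{(1)}$ and $qQ'=eQ'$ lies in the image of $\SK(E)$.

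No step is a real obstacle, because the hard work (torsion-freeness of the cokernel) is already done in Prop.~\ref{prop:cokernel-torsion-free}. The only points to check are that $\Nrd_Q(c)=1$ for $c\in Q'$, which is standard, and that the maps are the ones induced by the natural inclusion $E^*\hookrightarrow Q^*$. Both are immediate.
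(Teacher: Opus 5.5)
Your proposal is correct and follows essentially the same route as the paper. Injectivity comes from Lemma~\ref{lem:Q'capE*}, and surjectivity on torsion from the torsion-freeness of $Q^*/(E^*Q')$ in Prop.~\ref{prop:cokernel-torsion-free}. The $\SK$ part is then obtained by writing $q\in Q^{(1)}$ as $q=ec$ with $e\in E^*$, $c\in Q'$ and checking that $\Nrd_E(e)=1$.
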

\begin{proof}
Consider first $\TK$.  Since $(Q^*/Q') \big/ (E^*Q'/Q') \cong Q^*/E^*Q'$, which is 
torsion-free by Prop.~\ref{prop:cokernel-torsion-free}, for the torsion subgroups we have 
$\tau(Q^*/Q') = \tau(E^*Q'/Q')$.
Hence, $Q^{(\mu)} \subseteq E^*Q'$.
But $E^*/E' \cong E^*Q'/Q'$ by (\ref{isoforTK_E}).  Thus,
\begin{equation}
\label{TK1stable}
\TK(E) \, = \, \tau(E^*/E') \, \cong \, \tau(E^*Q'/Q') \, = \,\tau(Q^*/Q') \, = \TK(Q).
\end{equation}
For $a\in \Emu$, this isomorphism in (\ref{TK1stable}) maps $aE' \mapsto aQ'\in Q^{(\mu)}/Q'$.
Recall from Sec.~2 that $\Nrd_E =  \Nrd_Q|_E$.  Hence, if $a\in \Eone$ then $a\in Q^{(1)}$.  Therefore,
the isomorphism maps $\SK(E)$ into $\SK(Q)$.  To see that this injection is also surjective, take any 
$b\in Q^{(1)}$.  Since $b \in Q^{(\mu)} \subseteq E^*Q'$ we may write $b = cd$  with $c \in E^*$ and 
$d\in Q'$.  Then, 
$$
1 \, = \,\Nrd_Q(b)\, =\,  \Nrd_Q(c) \cdot \Nrd_Q(d) \, = \Nrd_E(c) \cdot 1.
$$
Thus, $c\in \Eone$.  The isomorphism in (\ref{TK1stable}) maps $cE' \in \SK(E)$ to 
$cQ' = bdQ' = bQ' \in \SK(Q)$.  Thus, the isomorphism $\TK(E) \cong \TK(Q)$ restricts
to an isomorphism $\SK(E) \cong \SK(Q)$.
\end{proof}

\begin{remark}
\label{other-stability-proof}
The stability theorem for $\SK (E)$ was proved as Th.~5.7 in  \cite{hazrat-wadsworth-2011}, and the 
corresponding result for $\TK (E)$ proved here is actually deducible by a slight variation of the 
argument given there for $\SK$.
(Specifically, in the statement of Prop.~5.6 and formula (5.6) in \cite{hazrat-wadsworth-2011}, 
replace $Q^{(1)}$ with $Q^{(\mu)}$, and the proof goes through with very minor adjustments.)
However, the proof given here for stability of $\SK (E)$ is substantially shorter and simpler than 
the one in \cite{hazrat-wadsworth-2011}.
\end{remark}

\section{Some consequences of the stability of $\operatorname{TK}_1$}\label{sec:stability-consequences}

The stability theorem for $\TK$ proved in the preceding section opens that way to deducing $\TK$ results for 
graded division algebras from corresponding results for (not necessarily valued) division algebras.  We 
illustrate that approach in this section by proving graded analogues to the primary decomposition theorem 
and related results  proved by Motiee in \cite{motiee-2013}.

It is well-known (see, e.g.,  Draxl, \cite[Lemma~6, p.~160]{draxl-1983}) that if $D_1$ and $D_2$ are 
finite-dimensional 
division algebras with the same center~$K$ and 
$\opn{gcd}(\opn{ind}(D_1), \opn{ind}(D_2)) = 1$, then
\begin{equation}
\label{SK_1prod}
\SK(D_1 \otimes_K D_2) \, \cong \, \SK(D_1 ) \times \SK(D_2).
\end{equation}
The naive analogue to this for $\TK$ fails to hold, but the appropriate  analogue is given by 
Motiee's 
primary decomposition theorem for $\TK$.   We first recall that theorem.
Let $D$ be central simple division algebra over  $K$.  
 For any  prime number $p$, let $\opn{TK_{1,p}}(D)$ denote the $p$-primary component of the abelian torsion group 
 $\TK(D)$.  
Write $\ind(D) = p_1^{r_1}\ldots p_k^{r_k}$ where the $p_j$ are distinct primes, and let 
$D \cong D_{p_1} \otimes _K \ldots \otimes _K D_{p_k}$ be its primary decomposition, i.e., the $D_{p_j}$ are central 
division algebras over $K$ (unique up to isomorphism) with $\opn{ind}(D_{p_j}) = p_j^{r_j}$ for each $j$.  Motiee's 
primary decomposition theorem \cite[Th.~5]{motiee-2013} says that $\TK(D)$ has the primary decomposition 
\begin{equation}
\label{prdecompD}
        \mathrm{TK}_1(D)\,
        \cong\,
        \big(
        {\textstyle\bigoplus}_{j=1}^k \mathrm{TK}_{1,p_j}(D_{p_j})
        \big)
        \,{\textstyle\bigoplus}\,
        \big(
        \underset{p\notin\{p_1,\ldots,p_k\}}{\textstyle\bigoplus}
        \mu_{K}[p]
        \big).
\end{equation}
It follows at once from this that if $D_1$ and $D_2$ are central simple division algebras over $K$
with $\opn{gcd}(\opn{ind}(D_1), \opn{ind}(D_2)) = 1$, since the primary decomposition of 
$D_1\otimes_K D_2$ is the tensor product of the primary decompositions of $D_1$ and $D_2$,
\begin{equation}
\label{TKtensorprod}
       \TK(D_1\otimes_K D_2)\, \cong\,
        \Big(\underset{\,p\,\mid\,\opn{ind}(D_1)}{\textstyle\bigoplus}\opn{TK}_{1,p}(D_1)\Big)
        \,{\textstyle{\bigoplus}}\,
        \Big(\underset{\,p\,\mid\,\operatorname{ind}(D_2)}{\textstyle{\bigoplus}}\opn{TK}_{1,p}(D_2)\Big)
         \,{\textstyle{\bigoplus}}\,
        \Big(\underset{\,p\,\nmid\,\opn{ind}(D_1)\opn{ind}(D_2)}{\textstyle{\bigoplus}}       \mu_K[p]\Big).
\end{equation} 
Just as (\ref{SK_1prod}) reduces the task of computing $\SK(D)$ to the case where $\opn{ind}(D)$ is a
prime power, (\ref{prdecompD}) and (\ref{TKtensorprod}) give the analogous reduction for $\TK(D)$.

We now prove the version of Motiee's primary decomposition theorem for $\TK$ of a graded division algebra.
For this, let $E$ be a graded division algebra with center $T$, with $\opn{ind}(E) = p_1^{r_1}\ldots p_k^{r_k}$  
for distinct primes $p_1, p_2, \ldots, p_k$.  Then, by \cite[Remark~6.7]{tignol-wadsworth-2015}, $E$ has a 
 unique-up-to-isomorphism primary decomposition
 ${E \cong_g E_{p_1}\otimes _T \ldots \otimes_T E_{p_k}}$, where each $E_{p_j}$ is a graded sub-division 
 algebra of $E$ with 
 center $T$ and $\opn{ind}(E_{p_j}) = p_j^{r_j}$. 
 
 \begin{theorem} [Primary Decomposition] With $E$, $T$, and the $E_{p_j}$ as above, 
 \label{prdecompTK_1E}
 \begin{equation}
 \label{TK_1Eprdecomp}
        \mathrm{TK}_1(E)\,
        \cong\,
        \big(
        {\textstyle\bigoplus}_{j=1}^k \mathrm{TK}_{1,p_j}(E_{p_j})
        \big)
        \,{\textstyle\bigoplus}\,
        \big(
        \underset{p\notin\{p_1,\ldots,p_k\}}{\textstyle\bigoplus}
        \mu_{T_0}[p]
        \big)
 \end{equation}
 \end{theorem}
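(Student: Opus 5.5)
The plan is to transfer Motiee's primary decomposition (\ref{prdecompD}) from $q(E)$ back to $E$ using the stability theorem, Th.~\ref{thm:stability-sk1-tk1}. Let $Q = q(E)$, with center $q(T)$, and set $Q_{p_j} = q(E_{p_j}) = E_{p_j}\otimes_T q(T)$ for each $j$. Since $E\cong_g E_{p_1}\otimes_T\cdots\otimes_T E_{p_k}$, tensoring with $q(T)$ over $T$ gives
$$
Q \,\cong\, Q_{p_1}\otimes_{q(T)}\cdots\otimes_{q(T)} Q_{p_k}.
$$
Each $Q_{p_j}$ is a central division algebra over $q(T)$, and $\ind(Q_{p_j}) = \ind(E_{p_j}) = p_j^{r_j}$. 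So this is exactly the primary decomposition of $Q$, with $\ind(Q)=\ind(E)$.

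Motiee's theorem (\ref{prdecompD}) applied to $Q$ then gives
$$
\TK(Q)\cong \big(\textstyle\bigoplus_j \opn{TK}_{1,p_j}(Q_{p_j})\big)\oplus\big(\bigoplus_{p\notin\{p_j\}}\mu_{q(T)}[p]\big).
$$
Next I apply Th.~\ref{thm:stability-sk1-tk1} twice. First, $\TK(E)\cong\TK(Q)$. Second, applied to each graded division algebra $E_{p_j}$, $\TK(E_{p_j})\cong\TK(Q_{p_j})$. The latter is an isomorphism of abelian groups, so it restricts to an isomorphism of $p_j$-primary components: $\opn{TK}_{1,p_j}(E_{p_j})\cong\opn{TK}_{1,p_j}(Q_{p_j})$.

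The remaining step is the identification $\mu_{q(T)}=\mu_{T_0}$. A root of unity in $q(T)$ is torsion in $q(T)^*$. Applying the homomorphism $\lambda\colon q(T)^*\to T^*$ that is the identity on $T^*$ (the commutative case of the map used in Lemma~\ref{lem:Q'capE*}) is one route. More directly, I would argue that $q(T)$ is integrally closed over $T$ in the appropriate graded sense, so the only torsion elements of $q(T)^*$ lie in $T^*$. Torsion elements of $T^*$ have degree $0$, since $\Gamma_T$ is torsion-free, so they lie in $T_0^*$. The cleanest version, which I expect to be the only mildly delicate point, uses the stability isomorphism itself in the commutative case: take $E=T$ in Th.~\ref{thm:stability-sk1-tk1}, so that $\TK(T)=\tau(T^*)\cong\tau(q(T)^*)=\mu_{q(T)}$ via inclusion. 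Combined with $\tau(T^*)=\mu_{T_0}$, this gives $\mu_{q(T)}=\mu_{T_0}$, and hence $\mu_{q(T)}[p]=\mu_{T_0}[p]$ for every prime $p$.

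Substituting these isomorphisms into Motiee's decomposition for $\TK(Q)$ yields (\ref{TK_1Eprdecomp}). I expect no real obstacle. The one thing to check carefully is that the primary decomposition of $E$ passes to the primary decomposition of $Q$, namely that $q(E_{p_j})$ has center $q(T)$ and the same index. This follows from $Z(E_{p_j})=T$ and $q(E_{p_j})=E_{p_j}\otimes_T q(T)$.
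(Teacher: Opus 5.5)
Your proposal is correct and follows the paper's proof. You tensor the graded primary decomposition with $q(T)$ to get the primary decomposition of $Q=q(E)$, apply Motiee's theorem to $Q$, and transfer back using Th.~\ref{thm:stability-sk1-tk1} for $E$ and for each $E_{p_j}$. The only variation is in identifying $\mu_{q(T)}=\mu_{T_0}$: the paper uses that $T$ is integrally closed in $q(T)$ (not ``$q(T)$ integrally closed over $T$'' as you phrased it), so roots of unity of $q(T)$ lie in $T^*$ and hence in $T_0^*$. Your preferred route instead applies the stability theorem to the graded field $T$ itself, which is also valid but heavier.
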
 
 
\begin{proof}
 Let $K = q(T)$, $Q = q(E) = E \otimes_T K$, and $Q_j = q(E_{p_j})$ for $j = 1,2, \ldots, k$.  
 So, 
 $$
 Q_1 \otimes_K \ldots \otimes_K Q_k \,\cong\,  q(E_{p_1} \otimes_T \ldots \otimes_T E_{p_k})\, \cong \,q(E) \,=\,Q.
 $$
 Since $\opn{ind}(Q_j) = \opn{ind}(E_{p_j}) = p_j^{r_j}$ for each $j$ and 
 $\opn{ind}(Q) = \opn{ind} (E) = p_1^{r_1} \ldots p_k^{r_k}$,
 it follows that $Q_1, \ldots ,Q_k$ are the components in the primary decomposition of $Q$.  
  Note also that $\mu_ K = \mu_T$, as the commutative integral domain $T$  is integrally closed 
 (see \cite[Prop.~5.2]{tignol-wadsworth-2015}), and $\mu_T = \mu_{T_0}$.  
  By the stability theorem~\ref{thm:stability-sk1-tk1} for $\TK$, we have $\TK(E) \cong \TK(Q)$ and 
  $\TK(E_{p_j}) \cong \TK(Q_j)$, 
  so  $\opn{TK}_{1, p_j}(E_{p_j}) \cong \opn{TK}_{1, p_j}(Q_j)$, for each $j$.  With these isomorphisms,
  the isomorphism (\ref{TK_1Eprdecomp}) follows from (\ref{prdecompD}) applied to $Q$.
 \end{proof}

 \begin{corollary} Let $E_1$ and $E_2$ be graded division algebras with center  $T$ such that 
 ${\opn{gcd}(\opn{ind}(E_1), \opn{ind}(E_2)) = 1}$.  Then, 
 \begin{equation}
\label{TKEtensorprod}
       \TK(E_1\otimes_T E_2)\, \cong\,
        \Big(\underset{\,p\,\mid\,\opn{ind}(E_1)}{\textstyle\bigoplus}\opn{TK}_{1,p}(E_1)\Big)
        \,{\textstyle{\bigoplus}}\,
        \Big(\underset{\,p\,\mid\,\operatorname{ind}(E_2)}{\textstyle{\bigoplus}}\opn{TK}_{1,p}(E_2)\Big)
         \,{\textstyle{\bigoplus}}\,
        \Big(\underset{\,p\,\nmid\,\opn{ind}(E_1)\opn{ind}(E_2)}{\textstyle{\bigoplus}}       \mu_{T_0}[p]\Big).
\end{equation} 
 \end{corollary}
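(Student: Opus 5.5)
The plan is to deduce the corollary from Theorem~\ref{prdecompTK_1E} applied to $E = E_1\otimes_T E_2$, in the same way that (\ref{TKtensorprod}) was deduced from (\ref{prdecompD}). There are three steps.

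First, I will check that $E = E_1\otimes_T E_2$ is a graded division algebra with center $T$ and $\opn{ind}(E) = \opn{ind}(E_1)\opn{ind}(E_2)$. For this I pass to quotient rings. Put $K = q(T)$ and $Q_i = q(E_i)$. Then $Q_1\otimes_K Q_2 \cong q(E_1\otimes_T E_2)$. Since the indices of $Q_1$ and $Q_2$ are relatively prime, $Q_1\otimes_K Q_2$ is a division algebra of index $\opn{ind}(E_1)\opn{ind}(E_2)$. The graded ring $E_1\otimes_T E_2$ is a graded subring of it, finite-dimensional over $T$. It should be a graded division ring: by \cite{tignol-wadsworth-2015}, a graded $T$-central simple algebra whose quotient algebra is a division algebra is itself a graded division algebra. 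Equivalently, I could invoke the graded analogue of the coprime-index tensor product theorem directly from \cite{tignol-wadsworth-2015}. This citation step is the main obstacle; if no clean reference exists, I will argue via the $q(\,\cdot\,)$ correspondence just described.

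Second, I will identify the primary components of $E$. Write the primary decompositions $E_1\cong_g \bigotimes_{p\mid \opn{ind}(E_1)} (E_1)_p$ and $E_2\cong_g \bigotimes_{p\mid \opn{ind}(E_2)} (E_2)_p$. Their tensor product is a decomposition of $E$ into graded division algebras of pairwise coprime prime-power indices whose product is $\opn{ind}(E)$. By uniqueness of the graded primary decomposition (\cite[Remark~6.7]{tignol-wadsworth-2015}), $E_p\cong_g (E_1)_p$ for $p\mid\opn{ind}(E_1)$, and $E_p\cong_g (E_2)_p$ for $p\mid \opn{ind}(E_2)$.

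Third, I will identify the $p$-primary pieces. Apply Theorem~\ref{prdecompTK_1E} to $E_1$ itself and take $p$-primary components, for $p\mid\opn{ind}(E_1)$. The only summand of (\ref{TK_1Eprdecomp}) with nontrivial $p$-part is $\opn{TK}_{1,p}((E_1)_p)$, because the other terms are $p_j$-primary groups for $p_j\neq p$ or are $\mu_{T_0}[q]$ for primes $q\nmid \opn{ind}(E_1)$. Hence $\opn{TK}_{1,p}(E_1)\cong \opn{TK}_{1,p}((E_1)_p)$, and similarly for $E_2$.

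Finally, substitute these identifications into (\ref{TK_1Eprdecomp}) for $E$. The set of primes dividing $\opn{ind}(E)$ is the disjoint union of those dividing $\opn{ind}(E_1)$ and those dividing $\opn{ind}(E_2)$. The remaining primes are exactly those with $p\nmid \opn{ind}(E_1)\opn{ind}(E_2)$, and they contribute $\mu_{T_0}[p]$. This gives (\ref{TKEtensorprod}).
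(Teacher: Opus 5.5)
Your proposal is correct and takes the same route as the paper. The paper deduces the corollary in one line from Theorem~\ref{prdecompTK_1E} applied to $E_1\otimes_T E_2$, using, as in the ungraded case, that the primary decomposition of the tensor product is the tensor product of the primary decompositions of $E_1$ and $E_2$. Your extra steps make explicit two points the paper leaves implicit: that $E_1\otimes_T E_2$ is a graded division algebra (via $q(E_1\otimes_T E_2)\cong q(E_1)\otimes_{q(T)} q(E_2)$ and \cite[Prop.~2.28]{tignol-wadsworth-2015}, as in the proof of Prop.~\ref{prop:prime-to-index-injectivity}), and that $\opn{TK}_{1,p}(E_i)\cong \opn{TK}_{1,p}((E_i)_p)$ for $p\mid \opn{ind}(E_i)$.
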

 
 \begin{proof}
 This follows immediately from Theorem~\ref{prdecompTK_1E}, just as (\ref{TKtensorprod}) above follows 
 from (\ref{prdecompD}).
 \end{proof}
 
 We give another application of the stability theorem for $\TK$ to prove a graded version of Motiee's 
 injectivity theorem \cite[Th.~7]{motiee-2013} for $\TK$ for suitable field extensions of the center.  For the 
 next propositions we consider a graded division algebra $E$ finite-dimensional over its center $T$ and 
 a finite-degree graded field extension $S$ of $T$.  Let $\Omega_T$ be the divisible hull of $\Gamma_T$
 (so $\Omega_T \cong \Gamma_T \otimes_\zz \qq$).   Then, as $\Gamma_E$ is torsion-free and 
 $\Gamma_E/\Gamma_T$ is a torsion group, there is a unique monomorphism $\Gamma_E \to \Omega_T$
 extending the inclusion $\Gamma_T \hookrightarrow \Omega_T$.  Therefore, we can view $\Gamma_E$
 as a subgroup of $\Omega_T$.  Likewise,  we can view 
 $\Gamma_S \subseteq \Omega_T$.  Thus, there is no ambiguity in forming the graded tensor product 
 $E \otimes_T S$ with grade group in $\Omega_T$.

 \begin{proposition}
\label{prop:prime-to-index-injectivity}
Let $E$ be a finite-dimensional graded division algebra over its center $T=Z(E)$. Let $T\subseteq S$ be a 
finite-degree graded field extension. If $\gcd(\operatorname{ind}(E),[S:T])=1$, then $E_S=E\otimes_T S$ is a 
graded division algebra, and the natural homomorphism
$$
        \rho_{S/T}:\mathrm{TK}_1(E)\longrightarrow \mathrm{TK}_1(E_S)
$$
is injective.
\end{proposition}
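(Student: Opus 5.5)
The plan is to reduce to Motiee's ungraded injectivity theorem \cite[Th.~7]{motiee-2013} via the stability Theorem~\ref{thm:stability-sk1-tk1}. Set $K=q(T)$, $Q=q(E)$, and $L=q(S)$. Since $S$ is a finite-degree graded field extension of $T$, $L=S\otimes_T K$ is a field extension of $K$ with $[L:K]=[S:T]$; this is the standard fact that $[q(S):q(T)]=[S:T]$ for graded field extensions. Then
$$
E_S\otimes_S L\,\cong\, E\otimes_T S\otimes_S L\,\cong\, E\otimes_T L\,\cong\, Q\otimes_K L .
$$
Because $\gcd(\ind(Q),[L:K])=\gcd(\ind(E),[S:T])=1$, the central simple algebra $Q\otimes_K L$ is a division algebra. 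This is the classical fact that the index drops only by a divisor of the degree.

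\emph{Step 1: $E_S$ is a graded division algebra.} $E_S$ is a graded ring graded by a subgroup of $\Omega_T$, which is torsion-free. It is free of finite rank $[E:T][S:T]$ over $S$. The element $1\otimes1$ is central, so the center contains $S$ and $E_S$ is finite-dimensional over its center once we know it is a graded division ring. $E_S$ embeds in $E_S\otimes_S L$, since $S$ is an integral domain and $E_S$ is free over $S$. A nonzero homogeneous element $x$ of $E_S$ is therefore nonzero in the division algebra $Q\otimes_K L$, so it is not a zero divisor in $E_S$. Left multiplication by $x$ maps each homogeneous component of $E_S$, a finite-dimensional $S_0$-space, injectively into another component of the same $S_0$-dimension. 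Rather than compare those dimensions directly, I would use the standard criterion that a graded ring which is finite over a graded field and has no homogeneous zero divisors is a graded division ring. This is the graded analogue of ``a finite-dimensional domain over a field is a division ring'', proved by the same injectivity argument applied to left multiplication on $E_S$ as a finite free $S$-module, together with the fact that homogeneous units are exactly the invertible homogeneous elements. Hence $q(E_S)=E_S\otimes_S L\cong Q\otimes_K L=Q_L$.

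\emph{Step 2: naturality.} Consider the square whose top row is $\TK(E)\to\TK(E_S)$ and whose bottom row is $\TK(Q)\to\TK(Q_L)$, with vertical maps the stability isomorphisms of Theorem~\ref{thm:stability-sk1-tk1}. All four maps are induced by the inclusions $E^*\subseteq E_S^*$, $E^*\subseteq Q^*$, $E_S^*\subseteq Q_L^*$ and $Q^*\subseteq Q_L^*$, and these inclusions are compatible inside $Q_L$, so the square commutes. Motiee's theorem \cite[Th.~7]{motiee-2013} says $\TK(Q)\to\TK(Q_L)$ is injective when $\gcd(\ind(Q),[L:K])=1$. Hence $\rho_{S/T}$ is injective.

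The main obstacle is Step 1: making rigorous that $E_S$ is a graded division ring and that $q(E_S)\cong Q\otimes_K L$ with center $L$. One also has to check that $Z(E_S)$ is a graded field with $q(Z(E_S))=L$, so that $E_S$ is finite-dimensional over its own center. For this I would note that $Z(E_S)=S$, since $Z(Q_L)=L$ and $Z(E_S)\subseteq Z(Q_L)\cap E_S=S$.
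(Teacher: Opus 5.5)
Your proposal is correct and is essentially the paper's proof. You identify $q(E_S)\cong Q\otimes_K L$ with $L=q(S)=S\otimes_T K$, use $\gcd(\ind(Q),[L:K])=1$ to see that this is a division algebra (so $E_S$ is a graded division algebra), and then push Motiee's injectivity theorem through the commutative square of stability isomorphisms from Theorem~\ref{thm:stability-sk1-tk1}.

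The only difference is Step~1, where the paper simply cites \cite[Prop.~2.28]{tignol-wadsworth-2015} and you sketch the criterion yourself. Your sketch works provided you use the grading: all nonzero components of $E_S$ have equal $S_0$-dimension, because left multiplications by nonzero homogeneous elements of degrees $\delta$ and $-\delta$ inject in both directions. Injectivity of an $S$-linear endomorphism of a finite free $S$-module would not by itself give surjectivity.
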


\begin{proof}
Put $K=q(T)$, $L=q(S)$, and $Q=q(E)$, which is a division algebra with center $K$.
 Since $S$ is a finite-degree  graded field extension of~$T$, so $S$ is 
integral over $T$, we have $q(S) = S\otimes_T q(T)$ (cf. \cite[Lemma~2.15]{tignol-wadsworth-2015}). That, is the 
quotient field 
$L$ of $S$ coincides with the quotient division algebra of $S$ as a graded $T$-algebra.  Hence also, $q(E_S)$ 
is the same (up to isomorphism) whether $E_S$ is viewed as an $S$-algebra or a $T$-algebra.  Note that
\begin{equation}
\label{q(E_S)}
Q\otimes_K L \,= \, q(E) \otimes_K q(S) \, = \, (E \otimes_T K)\otimes _K (S\otimes _T K) \,
\cong \, (E \otimes _T S) \otimes _T K \, = \, q(E_S).
\end{equation}
Moreover, 
$\opn{ind}(Q)=\opn{ind}(E)$ and $[L:K]=[S:T]$. Hence, 
\begin{equation}
\label{gcds}
\gcd(\opn{ind}(Q),[L:K]) \,=\,\gcd(\opn{ind}(E),[S:T])\,=\ 1.
\end{equation}
Therefore $Q\otimes _K L$ is a division ring. Since $q(E_S)$ is a division algebra, by 
\cite[Prop.~2.28]{tignol-wadsworth-2015} $E_S$ is a graded division algebra.

Now Motiee's injectivity theorem \cite[Th.~7]{motiee-2013} applies to the  central division algebra $Q$ over 
$K$ and the finite-degree field extension $L/K$. It says that the natural homomorphism
$$
        \rho_Q:\mathrm{TK}_1(Q)\longrightarrow \mathrm{TK}_1(Q\otimes_K L)
$$
is injective. Since $q(E_S)\cong Q\otimes_K L$, we regard this as an injective homomorphism
$
        \rho_Q:\mathrm{TK}_1(q(E))\longrightarrow \mathrm{TK}_1(q(E_S)).
$

By the stability theorem \ref{thm:stability-sk1-tk1} for $\mathrm{TK}_1$, the inclusions $E^*\hookrightarrow q(E)^*$ 
and $E_S^*\hookrightarrow q(E_S)^*$ induce isomorphisms
$
        \theta_E:\mathrm{TK}_1(E)\overset{\sim}{\longrightarrow}\mathrm{TK}_1(q(E))
 $
 and
 $       
        \theta_{E_S}:\mathrm{TK}_1(E_S)\overset{\sim}{\longrightarrow}\mathrm{TK}_1(q(E_S)).
$
Furthermore, it is easy to see that $\theta_{E_S}\circ \rho_{S/T}=\rho_Q\circ\theta_E$.
Hence, $\rho_{S/T}= \theta_{E_S}^{-1}\circ \rho_Q\circ\theta_E$, which is injective, 
as $\theta_{E_S}^{-1}$, $ \rho_Q$, and~$\theta_E$ are each injective.
\end{proof}

The preceding proposition gives injectivity of $\TK$ for a scalar extension
under the prime-to-index hypothesis.  If this hypothesis is dropped but $E_S$ is still a graded division algebra, 
we show that there is still some control over the kernel. The point is that the kernel is already contained 
in~$\opn{SK}_1(E)$.

\begin{proposition}
Let $E$ be a graded division algebra finite-dimensional over its center $T$,  let $S$ be a finite-degree
 graded field extension of $T$, and let $E_S = E \otimes_T S$.  Suppose that $E_S$ is a graded division
 algebra.  Let
$$
        \rho_{S/T}:\operatorname{TK}_1(E)\longrightarrow \operatorname{TK}_1(E_S)
$$
be the natural map.  Then,
\begin{enumerate}[font=\normalfont]
\item[(i)]
$\ker(\rho_{S/T}) =  \ker\bigl(\opn{SK}_1(E)\longrightarrow \opn{SK}_1(E_S)\bigr)
\subseteq \operatorname{SK}_1(E)$.
\item[(ii)]
We have 
$$
[S:T]\cdot\ker(\rho_{S/T})=0 \quad \text{and} \quad\operatorname{ind}(E)\cdot\ker(\rho_{S/T})=0.
$$
Consequently, the exponent of $\ker(\rho_{S/T})$ divides $\gcd(\operatorname{ind}(E),[S:T])$.  
\item[(iii)]
For every prime $p$ with $p\nmid \gcd(\operatorname{ind}(E),[S:T])$, the map
$$
        \operatorname{TK}_{1,p}(E)\longrightarrow \operatorname{TK}_{1,p}(E_S)
$$
is injective.
\end{enumerate}
\end{proposition}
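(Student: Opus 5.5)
We will work directly with reduced norms in the graded setting; stability is not needed. Write $n=[S:T]$ and $m=\opn{ind}(E)$. The basic tool is the compatibility $\Nrd_{E_S}(a\otimes 1)=\Nrd_E(a)$ for $a\in E$, viewed inside $S$. This holds because $q(E_S)\cong Q\otimes_K L$ by (\ref{q(E_S)}), and reduced norms are invariant under scalar extension. We also use the norm/corestriction map on $K_1$: viewing $E_S$ as a graded $E$-module, which is free of rank $n$, left multiplication gives $E_S\hookrightarrow M_n(E)$, and composing with the Dieudonn\'e determinant gives a homomorphism $N\colon K_1(E_S)\to K_1(E)$. Equivalently, pass to $Q$ and $Q\otimes_K L$, where the transfer $K_1(Q\otimes_K L)\to K_1(Q)$ is standard, and transport everything through the stability isomorphisms of Th.~\ref{thm:stability-sk1-tk1}, exactly as in the proof of Prop.~\ref{prop:prime-to-index-injectivity}. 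This second route is the cleanest, so all identities will be checked in $Q$.

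For (i), let $aE'\in\ker\rho_{S/T}$ with $a\in\Emu$. Then $a\otimes 1\in E_S'$, so $\Nrd_E(a)=\Nrd_{E_S}(a\otimes1)=1$. Hence $a\in\Eone$, and the kernel lies in $\SK(E)$. This shows that $\ker\rho_{S/T}$ equals the kernel of the restriction of $\rho_{S/T}$ to $\SK(E)$, and that restriction is just the natural map $\SK(E)\to\SK(E_S)$.

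For (ii), $m\cdot\ker\rho_{S/T}=0$ follows at once from (i), since $\SK(E)$ has $\opn{ind}(E)$-torsion by \cite[Prop.~3.3(ii)]{hazrat-wadsworth-2011}. For $n\cdot\ker\rho_{S/T}=0$, use the composite $N\circ\rho$. On $K_1(Q)$, the map $K_1(Q)\to K_1(Q\otimes_K L)\to K_1(Q)$ is multiplication by $[L:K]=n$, since $a\otimes1$ acts on the free $Q$-module $Q\otimes_K L$ of rank $n$ as a scalar matrix. Through the stability isomorphisms this yields $N\circ\rho_{S/T}=n\cdot\opn{id}$ on $\TK(E)$, so any kernel element is killed by $n$. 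Since the kernel is killed by both $m$ and $n$, its exponent divides $\gcd(m,n)$.

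For (iii), take $p\nmid\gcd(m,n)$. The $p$-primary part of $\ker\rho_{S/T}$ is killed by a power of $p$ and also by $\gcd(m,n)$, which is prime to $p$, so it is trivial. The $p$-primary part of $\ker\rho_{S/T}$ is exactly the kernel of $\operatorname{TK}_{1,p}(E)\to\operatorname{TK}_{1,p}(E_S)$, giving injectivity. The main obstacle is justifying the transfer identity $N\circ\rho=n$. I would verify it in $Q$, using the regular representation of $L$ over $K$ and the Dieudonn\'e determinant. I would also check that the stability isomorphisms commute with $\rho$; that commutation is asserted in the proof of Prop.~\ref{prop:prime-to-index-injectivity}.
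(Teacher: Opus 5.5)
Your argument is correct and is essentially the paper's proof: (i) uses $\Nrd_{E_S}(a\otimes 1)=\Nrd_E(a)$, (ii) combines the $\opn{ind}(E)$-torsion of $\SK(E)$ with $\opn{cor}\circ\opn{res}=[L:K]\cdot\opn{id}$ on $K_1(Q)$ transported through the stability isomorphisms, and (iii) follows formally from (ii). The only blemish is expository: your opening remark that stability is not needed contradicts what you actually (and correctly) do, and the Dieudonn\'e-determinant route over $M_n(E)$ would need extra justification since $E$ is not a division ring, so just drop it.
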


\begin{proof}
For (i), 
Let $x\in\ker(\rho_{S/T})$, and write $x=aE'$, with $a\in E^{(\mu)}$.  Since $\rho_{S/T}(x)=1$, we have 
$a\otimes1\in E_S'$.  Hence $\operatorname{Nrd}_{E_S}(a\otimes1)=1$.  By the compatibility of reduced norms 
under scalar extension, $\operatorname{Nrd}_{E_S}(a\otimes1)=\operatorname{Nrd}_E(a)$, viewed as 
an element of $S^*$.  Since $T^*\hookrightarrow S^*$, it follows that $\operatorname{Nrd}_E(a)=1$.  Thus 
$a\in E^{(1)}$, and hence $x\in\operatorname{SK}_1(E)$.  Therefore, 
$\ker(\rho_{S/T})\subseteq \operatorname{SK}_1(E)$.

For the reverse inclusion, note that every element of $\operatorname{SK}_1(E)$ which maps trivially in 
$\operatorname{SK}_1(E_S)$ clearly maps trivially in $\operatorname{TK}_1(E_S)$.  Hence,
$
        \ker(\rho_{S/T})
        =
        \ker\bigl(\operatorname{SK}_1(E)\to \operatorname{SK}_1(E_S)\bigr).
$

For (ii),
Put $K=q(T)$, $L=q(S)$, and $Q=q(E)$.  Since $S$ is  a finite-degree  graded field extension of $T$
and $E_S$ is a graded division algebra, 
we saw in (\ref{q(E_S)}) that
$
        q(E_S)\cong Q\otimes_K L.
$
As we noted in the preceding proof, by the stability theorem  $\rho_{S/T}$ identifies with the  scalar extension map 
$\rho_{L/K}\colon\opn{TK}_1(Q)\to \opn{TK}_1(Q\otimes_K L)$.  For ordinary central simple algebras, restriction 
and corestriction on $K_1$ 
satisfy
$$
        \opn{cor}_{L/K}\circ\opn{res}_{L/K}
        \,=\,
        [L:K]\cdot\opn{id}_{K_1(Q)}.
$$
Therefore, every element of $\ker(\rho_{L/K})$ is killed by $[L:K]$.
Since $[L:K]=[S:T]$, every element in $\ker(\rho_{S/T})$ is therefore killed by $[S:T]$.  On the other hand, by the 
preceding 
proposition
$\ker(\rho_{S/T})\subseteq\opn{SK}_1(E)$.  Since $\operatorname{SK}_1(E)$ is killed by $\operatorname{ind}(E)$, 
the kernel is also killed by $\operatorname{ind}(E)$.  Hence its exponent divides $\gcd(\operatorname{ind}(E),[S:T])$. 

Part (iii) follows  immediately from part (ii). 
\end{proof}

\section*{Acknowledgements}
The first author (H. V. Khanh) was supported by the Vietnam National Foundation for Science and Technology Development (NAFOSTED) under Grant No.~101.04-2025.41.


\begin{thebibliography}{999999}

\bibitem[D]{draxl-1983} P. Draxl, {\it{Skew Fields}}, London Math.~Soc.~Lec.~Note Ser., Vol.~81, 
Cambridge Univ.~Press, Cambridge, 1983.

\bibitem[E]{ershov-1982} Yu.~L.~Ershov, Henselian valuations of division rings and the group $\SK$,
Mat.~Sb.~(N.S.), \textbf{117} (159),  (1982), no.~1, 60--68 (in Russian); English transl: Math.~USSR-Sb., 
\textbf{45} (1983), 63--71.


\bibitem[HaW1]{hazrat-wadsworth-2011}
R.~Hazrat and A.~R.~Wadsworth, $\mathrm{SK}_1$ of graded division algebras, Israel J.~Math., \textbf{183} (2011), 117--163.

\bibitem[HaW2]{hazrat-wadsworth-unitary-2011}
R.~Hazrat and A.~R.~Wadsworth, 
Unitary $\mathrm{SK}_1$ of graded and valued division algebras, 
Proc.~London Math.~Soc. (3), \textbf{103} (2011), no.~3, 508--534.

\bibitem[HwW]{hwang-wadsworth-99}
Y.-S.~Hwang and A.~R.~Wadsworth, Correspondences between valued division algebras and graded division algebras, J. Algebra, \textbf{220} (1999), 73--114.

\bibitem[KK]{khanh-khoa-2025}
H.~V.~Khanh and N.~D.~A.~Khoa,
On the congruence theorem for valued division algebras,
Arch.~Math., \textbf{125} (2025), 369--377.

\bibitem[J]{jacobson-1996}
N.~Jacobson, {\it{Finite-Dimensional Division Algebras over Fields}}, Springer, Berlin, 1996.

\bibitem[M]{motiee-2013}
M.~Motiee,
On torsion subgroups of Whitehead groups of division algebras,
Manuscripta Math., \textbf{141} (2013), no.~3--4, 717--726.


\bibitem[P1]{platonov-1976}
V.~P.~ Platonov, The Tannaka-Artin problem, and reduced K-theory,   Izv.~Akad.~Nauk SSSR Ser.~Mat.,
\textbf{40} (1976), no.~2, 227--261, 469 (in Russian); English transl.:  Math.~USSR Izv., 
\textbf{10} (1976), 211--243.

\bibitem[P2]{platonov-stability-1976}
V.~P.~Platonov, Reduced K-theory and approximation in algebraic groups, pp.~198--207, 270, in 
Number theory, mathematical analysis and their applications,
Trudy Mat. Inst. Steklov. \textbf{142} (1976) (in Russian); English transl.: Proc.~Steklov Inst.~Math., 
\textbf{142} (1979), 213--224. 


\bibitem[PY]{platonov-yanchevskii-1979} V.~P.~Platonov and V.I. Yanchevski\v{\i}, $\opn{SK}_1$ for division rings of 
noncommutative rational functions,
Dokl.~Akad.~Nauk SSSR, \textbf{249} (1979), no.~5, 1064--1068 (in Russian); English transl.: Soviet Math.~Dokl.,
\textbf{20} (1979), no.~6, 1393--1397 (1980).

\bibitem[TW]{tignol-wadsworth-2015}
J.-P.~Tignol and A.~R.~Wadsworth,
\emph{Value Functions on Simple Algebras, and Associated Graded Rings},
Springer Monographs in Mathematics, Springer, Cham, 2015.


\end{thebibliography}
\end{document}